\newtheorem{theorem}{Theorem}[section]
\newtheorem{lemma}[theorem]{Lemma}
\theoremstyle{definition}
\newtheorem{assumption}[theorem]{Assumption}
\theoremstyle{remark}
\newtheorem{remark}[theorem]{Remark}
\numberwithin{equation}{section}
\begin{document}

\title[Symmetric power $L$-functions]{On the value-distribution of
symmetric power $L$-functions}

\author{Kohji Matsumoto}
\address{K. Matsumoto: Graduate School of Mathematics, Nagoya University, Chikusa-ku, Nagoya 464-8602, Japan}
\email{kohjimat@math.nagoya-u.ac.jp}

\author{Yumiko Umegaki}
\address{Y. Umegaki:
Faculty, Division of Natural Sciences, Research Group of Mathematics,
Nara Women's University, Kitauoya Nishimachi, Nara 630-8506, Japan}
\email{ichihara@cc.nara-wu.ac.jp}

\keywords{symmetric power $L$-function, automorphic $L$-function, 
value-distribution, density function, $M$-function}
\subjclass[2010]{Primary 11F66, Secondary 11M41}
%

\begin{abstract}
We first briefly survey the value-distribution theory of $L$-functions of the
Bohr-Jessen flavor (or the theory of ``$M$-functions''). 
Limit formulas for the Riemann zeta-function, Dirichlet $L$-functions, automorphic
$L$-functions etc. are discussed.
Then we prove new results on the value-distribution of symmetric power
$L$-functions, which are limit formulas involving associated $M$-functions. 
\end{abstract}

\maketitle

\section{The Bohr-Jessen limit theorem}\label{sec1}

We begin with the classical result of Bohr and Jessen \cite{BJ3032} on the
value-distribution of the Riemann zeta-function $\zeta(s)$.

Let $R$ be a rectangle in the complex plane $\mathbb{C}$ with the edges parallel
to the axes.    Let $s=\sigma+it$ be a complex variable.
By $\mu_1$ we mean the $1$-dimensional Lebesgue measure.
For $\sigma>1/2$ and $T>0$, we define
\begin{equation}\label{1-1}
V_{\sigma}(T,R;\zeta)=\mu_1\{t\in[-T,T]\;|\;\log\zeta(\sigma+it)\in R\},
\end{equation} 
where the rigorous definition of $\log\zeta(\sigma+it)$ will be given later
(in Section \ref{sec2}).
Then the result of Bohr and Jessen can be stated as follows.

\begin{theorem}[Bohr and Jessen \cite{BJ3032}]\label{thm-BJ}
%
\begin{itemize}
\item[(i)] There exists the limit
\begin{equation}\label{1-2}
W_{\sigma}(R;\zeta)=\lim_{T\to\infty}\frac{1}{T}V_{\sigma}(T,R;\zeta).
\end{equation}
\item[(ii)]
This limit can be written as
\begin{equation}\label{1-3}
W_{\sigma}(R;\zeta)=\int_R \mathcal{F}_{\sigma}(z,\zeta)|dz|,
\end{equation}
where $z=x+iy\in\mathbb{C}$, $|dz|=dxdy/2\pi$, 
and $\mathcal{F}_{\sigma}(z,\zeta)$ is a continuous non-negative, explicitly
constructed function defined on $\mathbb{C}$.
\end{itemize}
\end{theorem}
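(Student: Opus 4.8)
The plan is to follow the classical probabilistic route, viewing $\log\zeta(\sigma+it)$ as a limit of sums over primes of contributions that become mutually independent as $T\to\infty$. For $\sigma>1$ one has the absolutely convergent expansion $\log\zeta(s)=\sum_{p}\sum_{k\ge1}(k p^{ks})^{-1}$; set $g_\sigma(p,x)=\sum_{k\ge1}(k p^{k\sigma})^{-1}x^{k}$ for $x$ on the unit circle, and let $\log\zeta_M(s)=\sum_{p\le M}\sum_{k\ge1}(k p^{ks})^{-1}$ be the truncation to primes $p\le M$. Since $\{\log p : p\le M\}$ is linearly independent over $\mathbb{Q}$, the Kronecker--Weyl theorem shows that $(p^{-it})_{p\le M}$ is equidistributed on the torus $\mathbb{T}^{\pi(M)}$ as $t$ ranges over $[-T,T]$; hence $(2T)^{-1}\mu_1\{t\in[-T,T] : \log\zeta_M(\sigma+it)\in R\}$ converges, for $R$ with boundary of measure zero, to $\mathbb{P}\bigl(\sum_{p\le M}g_\sigma(p,X_p)\in R\bigr)$, where $(X_p)_p$ are independent and uniformly distributed on the unit circle.

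To remove the truncation I would invoke Kolmogorov's three-series theorem: the summands $g_\sigma(p,X_p)$ have mean zero and variance $\asymp p^{-2\sigma}$, so $\sum_p g_\sigma(p,X_p)$ converges almost surely precisely for $\sigma>1/2$, to a random variable with law $\mathbb{P}_\sigma$, which is the candidate limiting measure. Interchanging the two limits requires the uniform mean-value estimate
\[
\limsup_{T\to\infty}\frac{1}{2T}\int_{-T}^{T}\bigl|\log\zeta(\sigma+it)-\log\zeta_M(\sigma+it)\bigr|^{2}\,dt\longrightarrow 0\qquad(M\to\infty),
\]
together with its (routine) analogue for the random model, after which a standard approximation lemma for weak convergence finishes the identification of the limit with $\mathbb{P}_\sigma$. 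For $\sigma>1$ this is immediate. The delicate range is $1/2<\sigma\le1$: here $\log\zeta(\sigma+it)$ has first to be defined, as in Section~\ref{sec2}, by continuing $\int\zeta'/\zeta$ along a path from $2$ down to $\sigma+it$ that avoids the zeros, and one must excise the set of $t$ for which $\sigma+it$ is anomalously close to a zero, estimating its measure via zero-counting and bounding $\log\zeta$ off this set by mean-value theorems for $\zeta$ and $\zeta'/\zeta$. I expect this uniform approximation, and the control of $\log\zeta$ without absolute convergence, to be the main obstacle.

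For part (ii) I would study the characteristic function of $\mathbb{P}_\sigma$. By independence it factors as a convergent product $\Psi_\sigma(w)=\prod_p\Psi_{\sigma,p}(w)$, with $\Psi_{\sigma,p}(w)=\mathbb{E}\bigl[\exp\bigl(i\,\mathrm{Re}(\overline{w}\,g_\sigma(p,X_p))\bigr)\bigr]$ an explicit circle integral of Bessel type, the leading factor being $J_0(|w|p^{-\sigma})$. The essential analytic point is that $\prod_p|\Psi_{\sigma,p}(w)|$ decays faster than any power of $|w|$ as $|w|\to\infty$; this follows from the oscillatory decay of the $J_0$-factors together with the density of primes. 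Granting it, $\Psi_\sigma\in L^1(\mathbb{C})$, and Fourier inversion produces a continuous, non-negative function $\mathcal{F}_\sigma(z,\zeta)=\int_{\mathbb{C}}\Psi_\sigma(w)\,e^{-i\,\mathrm{Re}(\overline{w}z)}\,|dw|$ with $d\mathbb{P}_\sigma=\mathcal{F}_\sigma(z,\zeta)\,|dz|$; after matching the normalization in \eqref{1-2} this is exactly \eqref{1-3}. In particular $\mathbb{P}_\sigma$ charges no line, so $\partial R$ is $\mathbb{P}_\sigma$-null and the weak convergence above gives the limit in \eqref{1-2}, proving (i).
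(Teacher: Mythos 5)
Your outline is essentially sound, but it follows the Fourier-analytic route of Jessen and Wintner \cite{JW35} (and the modern probabilistic packaging via Kronecker--Weyl and weak convergence) rather than the original argument of Bohr and Jessen that Section \ref{sec1} describes: Bohr and Jessen work directly with the planar convex curves $\theta\mapsto -\log(1-p^{-\sigma}e^{2\pi i\theta})$ and their geometric theory of sums of convex curves \cite{BJ29} to build the limit distribution and its density, whereas you pass to the characteristic function $\Psi_{\sigma}=\prod_p\Psi_{\sigma,p}$ and recover $\mathcal{F}_{\sigma}$ by Fourier inversion. The latter is exactly the strategy this paper itself adopts for its new theorems (Lemmas \ref{lem-MU2} and \ref{lem-JW}), so your plan is well aligned with the surrounding material. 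Two of your steps, however, are asserted where the real work lies. First, the decay $|\Psi_{\sigma,p}(w)|\ll p^{\sigma/2}|w|^{-1/2}$ is precisely the Jessen--Wintner inequality: the $J_{0}$ factor is only the first-order approximation to the phase, and the rigorous bound rests on the convexity of the curve (equivalently, on first- and second-derivative tests for the phase $g_{\tau,n}$, as in Lemma \ref{lem-JW}). For $\zeta$ the curves are convex so this goes through, but it is the step that fails for general Euler products and is the raison d'\^etre of Sections \ref{sec5}--\ref{sec7}; granting it, your observation that the product over the roughly $\pi(|w|^{1/\sigma})$ primes with $p^{-\sigma}|w|\gg 1$ decays faster than any power of $|w|$ is correct and gives $\Psi_{\sigma}\in L^{1}$. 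Second, for part (i) in the range $1/2<\sigma\le 1$ the uniform mean-square approximation of $\log\zeta$ by the truncations $\log\zeta_{M}$ is the hard analytic input (mean value theorems for $\zeta$, zero-density estimates, and excision of the $t$ for which $\sigma+it$ is close to a zero); you correctly identify it as the main obstacle but do not supply it. As written, then, the proposal is an accurate road map whose two load-bearing lemmas are named but not proved. One small point of bookkeeping: with $V_{\sigma}$ taken over $t\in[-T,T]$, your normalization $(2T)^{-1}$ is the one consistent with \eqref{2-7}.
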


The limit $W_{\sigma}(R;\zeta)$ may be regarded as the probability of how many
values of $\log\zeta(\sigma+it)$ on the line $\Re s=\sigma$ belong to the given
rectangle $R$, and $\mathcal{F}_{\sigma}(z,\zeta)$ may be called the density
function of this probability.     Theorem \ref{thm-BJ} is now called the
Bohr-Jessen limit theorem.

\begin{remark}
A reformulation of this type of results in terms of weak convergence of probability
measures was given by Laurin{\v c}ikas (see \cite{Lau96}).
\end{remark}

The original proof of Bohr and Jessen is of some geometric flavor.
Their proof starts with the expression
\begin{equation}\label{1-4}
\log\zeta(\sigma+it)=-\sum_{n=1}^{\infty}\log(1-p_n^{-\sigma-it}),
\end{equation}
where $p_n$ is the 
$n$th prime, which is valid for $\sigma>1$.
They consider the truncation
\begin{equation}\label{1-5}
f_N(\sigma+it)=-\sum_{n=1}^N\log(1-p_n^{-\sigma-it})
    =-\sum_{n=1}^N\log(1-p_n^{-\sigma}e^{-it\log p_n}),
\end{equation}
which, even in the case $1/2<\sigma\leq 1$, approximates $\log\zeta(\sigma+it)$ 
in a certain mean value sense.    A key idea of Bohr and Jessen is to introduce
the auxiliary mapping $S_N:\mathbb{T}^N\to\mathbb{C}$ associated with
$f_N(\sigma+it)$ (where $\mathbb{T}^N\simeq [0,1)^N$ is the $N$-dimensional unit
torus) defined by
\begin{equation}\label{1-6}
S_N(\theta_1,\ldots,\theta_N;\zeta)=-\sum_{n=1}^N\log(1-p_n^{-\sigma}e^{2\pi i\theta_n})
\qquad (0\leq \theta_n <1).
\end{equation}
Let $z_n(\theta;\zeta)=-\log(1-p_n^{-\sigma}e^{2\pi i\theta})$.
Then each term $z_n(\theta_n;\zeta)$ on the right-hand side of \eqref{1-6}
describes a planer convex curve when $\theta_n$ varies from 0 to 1.
Therefore $S_N(\theta_1,\ldots,\theta_N;\zeta)$ is a kind of geometric ``sum'' of convex
curves.
Bohr and Jessen \cite{BJ29} developed a detailed theory on such sums of convex
curves, and applied it to the proof of their Theorem \ref{thm-BJ}.

Later Jessen and Wintner \cite{JW35} published an alternative proof of
Theorem \ref{thm-BJ}, which is more analytic (Fourier theoretic).
In their proof they used a certain inequality
(the Jessen-Wintner inequality), which is also related with convex 
properties of curves.

We also note that the analogue of Theorem \ref{thm-BJ} for $(\zeta'/\zeta)(s)$
was shown by Kershner and Wintner \cite{KW37}.     As for the explicit construction 
of the density function, see van Kampen and Wintner \cite{vKW37}.

\section{A generalization of the Bohr-Jessen limit theorem}\label{sec2}

It is a natural question to ask how to generalize Theorem \ref{thm-BJ}, the Bohr-Jessen 
limit theorem, to more
general zeta and $L$-functions.    An obstacle is that, in more general situation,
the geometry of corresponding curves becomes more complicated; especially, the convexity is
not valid in general.

Still, however, the part (i) of 
Theorem~\ref{thm-BJ} can be generalized to a fairly
general class of zeta-functions. 

Let $\mathbb{N}$ be the set of positive integers.     For any $n\in\mathbb{N}$, 
let $g(n)\in \mathbb{N}$, $f(k,n)\in\mathbb{N}$ and $a_n^{(k)}\in\mathbb{C}$
($1\leq k\leq n$).
Using the polynomilas given by
\[
A_n(X)=\prod_{k=1}^{g(n)}\left(1-a_n^{(k)}X^{f(k,n)}\right),
\]
we define the zeta-function $\varphi(s)$ by the Euler product
\begin{equation}\label{2-1}
\varphi(s)=\prod_{n=1}^{\infty} A_n(p_n^{-s})^{-1}.
\end{equation}
Assume 
\begin{equation}\label{2-2}
g(n)\leq C_0 p_n^{\alpha}, \qquad |a_n^{(k)}|\leq p_n^{\beta}
\end{equation}
with constants $\alpha,\beta\geq 0$, $C_0 >0$.
Then \eqref{2-1} is convergent absolutely in the region $\Re s>\alpha+\beta+1$.

Let
$\mathscr{M}_{\alpha\beta}$
be the set of all functions $\varphi(s)$ defined as above, satisfying \eqref{2-2}
and the following:
\begin{itemize}
\item[(i)] $\varphi(s)$ can be continued meromorphically to $\sigma\geq\sigma_0$,
where $\alpha+\beta+1/2\leq\sigma_0<\alpha+\beta+1$,
and all poles in this region are
included in a compact subset of $\{s\;|\;\sigma>\sigma_0\}$,
\item[(ii)] 
  $\varphi(\sigma+it)=O((|t|+1)^{C'_0})$
  for any $\sigma\geq\sigma_0$, with a constant 
  $C'_0>0$,
\item[(iii)] It holds that
\begin{equation}\label{2-3}
\int_{-T}^T |\varphi(\sigma_0+it)|^2 dt = O(T).
\end{equation}
\end{itemize}

The class 
\[
\mathscr{M}=\bigcup_{\alpha,\beta\geq 0}\mathscr{M}_{\alpha\beta}
\]
was first introdued by the first author \cite{Mat90}.
For $\sigma >\sigma_0$, 
define
\begin{equation}\label{2-4}
V_{\sigma}(T,R;\varphi)=\mu_1\{t\in[-T,T]\;|\;\log\varphi(\sigma+it)\in R\},
\end{equation} 
where the definition of $\log\varphi(s)$ (for $\varphi\in\mathscr{M}$) is as follows.    First, when
$\sigma>\alpha+\beta+1$ define
\[
\log\varphi(s)=-\sum_{n=1}^{\infty}\sum_{k=1}^{g(n)}\mathrm{Log}\left(1-a_n^{(k)}
p_n^{-f(k,n)s}\right),
\]
where $\mathrm{Log}$ means the principal branch.
Next, let
\[
G(\varphi)=\{s\;|\;\sigma\geq\sigma_0\}\setminus\bigcup_{\rho}
\{\sigma+i\Im\rho\;|\;\sigma_0\leq\sigma\leq\Re\rho\},
\]
where $\rho$ runs over all zeros and poles $\rho$ with $\Re\rho\geq\sigma_0$.
For any $s\in G(\varphi)$, define $\log\varphi(s)$ by the analytic continuation along
the horizontal path from the right.

In this general situation, the corresponding mapping is
\begin{equation}\label{2-5}
S_N(\theta_1,\ldots,\theta_N;\varphi)=\sum_{n=1}^N z_n(\theta_n;\varphi)
\qquad 
(0\leq \theta_n <1),
\end{equation}
where
\begin{equation}\label{2-6}
z_n(\theta_n;\varphi)=-\sum_{k=1}^{g(n)}
\log(1-a_n^{(k)}p_n^{-f(k,n)\sigma}e^{2\pi if(k,n)\theta_n}).
\end{equation}

In \cite{Mat90}, the following generalization of Theorem \ref{thm-BJ} (i) was shown.

\begin{theorem}[\!\!\cite{Mat90}]\label{thm-M90}
If $\varphi\in\mathscr{M}$,
then for any $\sigma>\sigma_0$, the limit
\begin{equation}\label{2-7}
W_{\sigma}(R;\varphi)=\lim_{T\to\infty}\frac{1}{2T}V_{\sigma}(T,R;\varphi)
\end{equation}
exists.
\end{theorem}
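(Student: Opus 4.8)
The plan is to follow the classical Bohr--Jessen strategy, replacing the curve geometry with softer (but sufficient) analytic input, since convexity is unavailable for general $\varphi\in\mathscr{M}$. First I would reduce the statement to a statement about the truncations $f_N(s)=S_N(\tfrac{t\log p_1}{2\pi},\ldots;\varphi)$, more precisely the finite Dirichlet-type sum $\sum_{n=1}^N z_n((-t\log p_n)/2\pi;\varphi)$ coming from the first $N$ prime factors. The key limit fact for each fixed $N$ is a Kronecker--Weyl equidistribution statement: since $\log p_1,\ldots,\log p_N$ are linearly independent over $\mathbb{Q}$, the point $\big((-t\log p_1)/2\pi,\ldots,(-t\log p_N)/2\pi\big)\bmod 1$ is equidistributed in $\mathbb{T}^N$ as $t$ ranges over $[-T,T]$, $T\to\infty$. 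Hence, for each fixed $N$ and each continuous bounded test function (and then, by a standard approximation of the indicator of a rectangle from inside and outside, for $W_\sigma(R;\cdot)$ applied to a rectangle whose boundary is null), the normalized measure of $t$ with $f_N(\sigma+it)\in R$ converges to the pushforward measure $(S_N)_*(\text{Lebesgue on }\mathbb{T}^N)$ evaluated on $R$.

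Second, I would show that the limiting measures $\nu_N := (S_N)_*(\mu_{\mathbb{T}^N})$ on $\mathbb{C}$ form a Cauchy sequence in the Lévy (or Prokhorov) metric, hence converge weakly to a probability measure $\nu_\infty$ on $\mathbb{C}$. This is where the growth hypothesis \eqref{2-2} enters: it gives $|z_n(\theta_n;\varphi)| \ll g(n)\, p_n^{-(\,\sigma-\beta\,)\min_k f(k,n)} \ll p_n^{\alpha+\beta-\sigma}$ uniformly in $\theta_n$, and since $\sigma>\sigma_0\ge\alpha+\beta+1/2$ we in fact have $\sum_n \mathbb{E}[|z_n|^2] < \infty$ (the relevant exponent is $<-1$), which by Kolmogorov's theorem for sums of independent $\mathbb{C}$-valued random variables shows $\sum_n z_n(\theta_n;\varphi)$ converges almost surely and in $L^2$ on $\prod_n\mathbb{T}$. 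That identifies $\nu_\infty$ as the law of the full random Euler product and gives $\nu_N \to \nu_\infty$ weakly.

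Third — and this is the genuinely delicate step — I must bridge from the truncations $f_N(\sigma+it)$ to $\log\varphi(\sigma+it)$ itself on the line $\Re s=\sigma$. Here one invokes the defining properties (i)--(iii) of $\mathscr{M}$: meromorphic continuation past $\sigma_0$ with poles in a compact set, polynomial growth in $t$, and the second-moment bound \eqref{2-3}. The standard device is a mean-square approximation: using a smoothed Mellin/Perron-type integral and shifting the contour to $\Re s=\sigma_0$, one controls $\int_{-T}^{T}|\log\varphi(\sigma+it)-f_N(\sigma+it)|\,dt$ (away from a small exceptional $t$-set around ordinates of zeros/poles, whose measure one bounds using property (ii) and a zero-counting estimate) by a quantity that is $o(T)$ as $T\to\infty$ after first choosing $N$ large. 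The main obstacle is exactly this uniformity: one needs the error in replacing $\log\varphi$ by $f_N$ to be small \emph{on average in $t$} with the right order, simultaneously handling the branch of $\log\varphi$ along horizontal paths (the set $G(\varphi)$) and the contribution of the possibly non-convex curves $z_n(\theta_n;\varphi)$. Granting this approximation, a routine $\varepsilon/3$ argument — $\limsup_T$ of the $\log\varphi$-count is close to the $f_N$-count, which converges to $\nu_N(R)$, which is close to $\nu_\infty(R)$ — combined with the analogous $\liminf$ bound, and with continuity of $\nu_\infty$ (no atoms, which follows from the $L^2$-convergence and the fact that infinitely many $z_n$ have nondegenerate, non-atomic law) to kill the boundary of $R$, yields that $\frac{1}{2T}V_\sigma(T,R;\varphi)\to\nu_\infty(R)=:W_\sigma(R;\varphi)$, completing the proof.
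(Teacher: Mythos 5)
Your outline is correct and follows essentially the route the paper attributes to \cite{Mat90} (and the alternative in \cite{Mat92a}): Kronecker--Weyl equidistribution for the truncations, weak convergence of the finite-product laws via a Prokhorov/L\'evy-type compactness argument, and a mean-square approximation of $\log\varphi$ by the truncations using conditions (i)--(iii) of the class $\mathscr{M}$ to bridge to the actual $L$-function. The paper only cites this proof rather than reproducing it, describing it as resting on Prokhorov's theorem plus simple arithmetic facts, and your sketch matches that description.
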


It can be seen that the class 
$\mathscr{M}$ includes a lot of important zeta and
$L$-functions.     The reason why such general statement can be shown is that, 
for the proof of this theorem, geometric properties of corresponding curves
\eqref{2-6} are not necessary.
In fact, the proof of Theorem \ref{thm-M90} is just based on (besides simple
arithmetic facts) Prokhorov's theorem in probability theory. 

An alternative proof is given in \cite{Mat92a} in the case of Dedekind zeta-functions of
algebraic number fields.     The method in \cite{Mat92a} is to use L{\'e}vy's
convergence theorem, again in probability theory.     This method can also be applied
to general 
$\varphi\in\mathscr{M}$,
which is pointed out in \cite{Mat92b} and a sketch
of the argument in the general case is described in \cite{MU2}.

Therefore, now we can say that the part (i) of Theorem \ref{thm-BJ} has been
sufficiently generalized.     However Theorem \ref{thm-BJ} includes the part (ii).
The part (ii) gives an explicit expression of the limit value in terms of the density
function, so it is highly
desirable to generalize also the part (ii), in order to study the behavior of the
limit $W_{\sigma}(R;\varphi)$ more closely.

However this part is related with the geometry of corresponding curves, and its
generalization is much more difficult.
Joyner \cite{Joy86} discussed the properties of density functions in the case 
of Dirichlet $L$-functions, and the first author \cite{Mat92a} studied the density
functions for
Dedekind zeta-functions of Galois number fields, but both of them are the cases when
the corresponding curves \eqref{2-6} are convex. 

In the case of automorphic $L$-functions, the corresponding \eqref{2-6} is not
always convex.     The study in this case will be given in later sections.

\section{$M$-functions}\label{sec3}

The theorems of Bohr-Jessen type consider the situation when $t=\Im s$ varies.    That is, 
Theorems \ref{thm-BJ} and \ref{thm-M90} are results in $t$-aspect.
When we consider more general zeta and $L$-functions, it is also important to study
the value-distribution in some different aspect.     
For example, it is possible to consider the modulus aspect for Dirichlet 
or Hecke $L$-functions.

Let $\chi$ be a certain character, and $L(s,\chi)$ be the 
associated $L$-function (over certain number field or function field).    
Ihara \cite{Iha08} studied the behavior of
$(L'/L)(s,\chi)$ from this aspect, and proved the limit formula of the form
\begin{equation}\label{3-1}
\mathrm{Avg}_{\chi}\Phi\left(\frac{L'}{L}(s,\chi)\right)=\int_{\mathbb{C}}M_{\sigma}(z)
\Phi(z)|dz|
\end{equation}
for a certain average (specified below) with respect to $\chi$, where $\Phi$ is a
test function, and $M_{\sigma}:\mathbb{C}\to\mathbb{R}$ is an explicitly constructed 
density function, which is non-negative, and belongs to the class $C^{\infty}$.
Ihara called this $M_{\sigma}$ the ``$M$-function'' associated with the
value-distribution of $L(s,\chi)$.

When $\sigma>1$, Ihara proved \eqref{3-1} for any continuous test function $\Phi$.
In the function field case, using the (proved) Riemann hypothesis, Ihara proved
\eqref{3-1} even in some subregion in the critical strip for more restricted class of
$\Phi$ (e.g. $\sigma>3/4$ when $\Phi\in L^1\cap L^{\infty}$ and moreover its Fourier
transform has compact support).

As for the meaning of $\mathrm{Avg}_{\chi}$, Ihara considered several types of averages,
but when the ground field is the rational number field $\mathbb{Q}$, the meaning is
one of the following: The first type is
\begin{equation}\label{3-2}
\mathrm{Avg}_{\chi}\phi(\chi)=\lim_{m\to\infty}\frac{1}{\pi(m)}
\sum_{2< p\leq m}\frac{1}{p-2}
{\sum_{\chi (\text{mod}\; p)}}^{\!\!\!\!\!*} \phi(\chi)
\end{equation}
for a complex-valued function $\phi$ of $\chi$, 
where $\pi(m)$ denotes the number of primes up to $m$, $p$ runs over primes, and $\sum^*$ 
stands for the sum over primitive Dirichlet characters of modulus $p$.
The second type is considered for the character $\chi_{\tau}(p)=p^{-i\tau}$.     
Then the Euler product
of the associated $L$-function is
\[
\prod_p(1-\chi_{\tau}(p)p^{-s})^{-1}
=\prod_p(1-p^{-s-i\tau})^{-1}=\zeta(s+i\tau),
\]
and the meaning of the average is given by
\begin{equation}\label{3-3}
\mathrm{Avg}_{\chi}\phi(\chi)=\lim_{T\to\infty}\frac{1}{2T}\int_{-T}^T \phi(\chi_{\tau})
d\tau.
\end{equation}

The second type of average actually implies a limit formula for the Riemann zeta-function
in $t$-aspect.    In particular, the formula \eqref{3-1} for this second type of average,
with $\Phi$ being the characteristic function of $R$, coincides with the formulation of
Kershner and Wintner \cite{KW37}.    An important discovery in Ihara \cite{Iha08} is
that the same function $M_{\sigma}$ can be used in the formula \eqref{3-1} for both of
the meanings of average.

Now we restrict ourselves to the case when the ground field is $\mathbb{Q}$, so the
meaning of the average is \eqref{3-2} or \eqref{3-3}.     We also consider the 
value-distribution of $\log L(s,\chi)$, so the corresponding limit formula is of
the form
\begin{equation}\label{3-4}
\mathrm{Avg}_{\chi}\Phi\left(\log L(s,\chi)\right)=\int_{\mathbb{C}}\mathcal{M}_{\sigma}(z)
\Phi(z)|dz|
\end{equation}
with the density function $\mathcal{M}_{\sigma}$.

\begin{theorem}[Ihara and Matsumoto \cite{IM11a} \cite{IM14}]\label{thm-IM1114}
For any $\sigma> 1/2$, and for the average \eqref{3-2} or \eqref{3-3}, both
\eqref{3-1} and \eqref{3-4} hold with explicitly constructed density functions
(``$M$-functions'')
$M_{\sigma}$ and $\mathcal{M}_{\sigma}$, for any test function $\Phi$ which is
(i) any bounded continuous function, or (ii) the characteristic function of either a
compact subset of $\mathbb{C}$ or the complement of such a subset.
\end{theorem}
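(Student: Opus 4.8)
The plan is to construct the density functions as infinite convolutions of local, prime‑by‑prime distributions, to prove the limit formulas \eqref{3-1} and \eqref{3-4} first in the half‑plane of absolute convergence $\sigma>1$, and then to descend to $1/2<\sigma\le 1$ by an averaged mean‑square approximation. First I would set up the local model: for each prime $p$, equip the unit circle $\{|z|=1\}\subset\mathbb{C}$ with normalized Haar measure and consider the law of $-\log(1-zp^{-\sigma})$ (for \eqref{3-4}) and of its $s$‑derivative analogue (for \eqref{3-1}), calling these the local distributions at $p$. The arithmetic input, valid for both averages \eqref{3-2} and \eqref{3-3}, is that for each fixed $N$ the tuple $(\chi(p_1),\dots,\chi(p_N))$ becomes jointly equidistributed on $(\{|z|=1\})^N$ as the averaging parameter ($m$ or $T$) tends to infinity: for \eqref{3-3} this follows from Weyl's criterion and the $\mathbb{Q}$‑linear independence of $\{\log p_n\}$, and for \eqref{3-2} from orthogonality of Dirichlet characters together with the prime number theorem, as in \cite{Iha08}. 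Consequently the truncation $\log L_N(s,\chi)$ obtained by retaining only the Euler factors at $p_1,\dots,p_N$ has, in the limit, the law of a sum of $N$ independent copies of the local variables, whose density $\mathcal{M}_{\sigma,N}$ is the $N$‑fold convolution of the local densities; this already establishes \eqref{3-4} (and likewise \eqref{3-1}) with $\mathcal{M}_{\sigma,N}$ in place of $\mathcal{M}_\sigma$ for every bounded continuous $\Phi$ and every $\sigma>1$, since there $\log L(s,\chi)-\log L_N(s,\chi)\to 0$ uniformly in $\chi$.

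Next I would prove that $\mathcal{M}_{\sigma,N}$ converges, uniformly and in every $C^k$, to a non‑negative $\mathcal{M}_\sigma\in L^1(\mathbb{C})$ (and similarly $M_{\sigma,N}\to M_\sigma$), which identifies the density functions of the theorem. This is done on the Fourier side: the characteristic function of $\mathcal{M}_{\sigma,N}$ is the finite product of those of the local distributions, and one shows that this product converges and decays faster than any power of the frequency. The decay comes from the fact that each local factor is the push‑forward of Haar measure under a non‑degenerate (for large $p$, nearly circular) curve, so a convexity‑type estimate — the mechanism behind the Jessen-Wintner inequality — keeps each local characteristic function bounded away from $1$ on a set of frequencies of positive density, and these contributions add up over $p$; hence $\mathcal{M}_\sigma\in C^\infty$.

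To extend \eqref{3-4} and \eqref{3-1} to $1/2<\sigma\le 1$, where the Euler product no longer converges, the key analytic input is the averaged mean‑square bound
\[
\mathrm{Avg}_\chi\bigl|\log L(s,\chi)-\log L_N(s,\chi)\bigr|^2\longrightarrow 0\qquad(N\to\infty),
\]
the limit defining $\mathrm{Avg}_\chi$ being taken before $N\to\infty$. For \eqref{3-3} this amounts to a second‑moment estimate for $\zeta$ in the strip $\sigma>1/2$, furnished by classical mean‑value theory since one stays strictly to the right of the critical line; for \eqref{3-2} it rests on the large sieve inequality together with mean‑value bounds for $\log L(\sigma,\chi)$ over primitive characters, the latter requiring one to dispose of the rare characters whose $L$‑function has a zero near the line $\Re s=\sigma$. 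Granting this, the laws of $\log L(s,\chi)$ and of $\log L_N(s,\chi)$ are within $o(1)$ of each other in the L{\'e}vy--Prokhorov metric, so an $\varepsilon/3$ argument carries the limit formula from the truncation over to $\log L$ itself, using the $C^\infty$‑convergence $\mathcal{M}_{\sigma,N}\to\mathcal{M}_\sigma$ from the previous step. Poles cause no difficulty: $L(s,\chi)$ is entire for non‑principal $\chi$ in \eqref{3-2}, and for \eqref{3-3} the pole of $\zeta(s+i\tau)$ only affects a set of $\tau$ of Lebesgue measure zero.

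Finally, to pass from bounded continuous $\Phi$ to $\Phi=\mathbf{1}_K$ with $K$ compact (and to $\Phi=\mathbf{1}_{\mathbb{C}\setminus K}$), I would trap $\mathbf{1}_K$ between continuous functions supported just inside and just outside $K$, apply the formula already proved to each, and shrink the approximations to $K$; since the limit measure $\mathcal{M}_\sigma(z)\,|dz|$ (resp. $M_\sigma(z)\,|dz|$) is absolutely continuous with continuous density and finite total mass, the upper and lower estimates both converge to $\int_K\mathcal{M}_\sigma(z)\,|dz|$, and the complement case follows by writing $\mathbf{1}_{\mathbb{C}\setminus K}=1-\mathbf{1}_K$. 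The step I expect to be the main obstacle is the descent to $1/2<\sigma\le 1$: establishing the averaged mean‑square approximation of $\log L$ by its truncated Euler product uniformly down to any $\sigma>1/2$ is, in the $t$‑aspect, essentially as deep as the second moment of $\zeta$ near the critical line, and in the conductor aspect it couples the large sieve with a delicate treatment of the characters for which $L(s,\chi)$ has a zero close to $\Re s=\sigma$. The equidistribution inputs, the convolution construction, the $C^\infty$‑regularity of the $M$‑functions, and the portmanteau upgrade are, by comparison, routine modulo bookkeeping.
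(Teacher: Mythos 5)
Your outline matches the strategy of Ihara--Matsumoto in \cite{IM11a} and \cite{IM14} (the present paper only quotes the theorem from those works): local distributions on the unit circle combined by Euler-product/convolution on the Fourier side, equidistribution from Kronecker--Weyl resp.\ orthogonality of characters, decay of the local characteristic functions via a Jessen--Wintner-type estimate, descent to $\sigma>1/2$ by averaged mean-square approximation of $\log L$ by truncated Euler products, and a sandwich argument for indicator test functions. You have also correctly identified the mean-value estimates as the substitute for RH that makes the result unconditional, so no further comparison is needed.
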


In the number field case the Riemann hypothesis is surely not yet proved, but instead,
we can apply certain mean value estimates to obtain the above theorem.
Therefore Theorem \ref{thm-IM1114} is unconditional.    In particular, this theorem
includes the Bohr-Jessen limit theorem, and its $\zeta'/\zeta$ analogue due to
Kershner and Wintner, as special cases.

If we assume the Riemann hypothesis (for Dirichlet $L$-functions), even stronger result
can be shown.     In \cite{IM11b}, the average
\begin{equation}\label{3-5}
\mathrm{Avg}_{\chi}\phi(\chi)=\lim_{p\to\infty}\frac{1}{p-2}
{\sum_{\chi (\text{mod}\; p)}}^{\!\!\!\!\!*} \phi(\chi)
\end{equation}
was considered, and for this average, both \eqref{3-1} and \eqref{3-4} were proved for
more general class of test functions (that is, (i) of Theorem~\ref{3-1} is replaced by any continuous
function with at most exponential growth) under the assumption of the Riemann hypothesis.

The corresponding study for $M$-functions in the function field case was done in 
\cite{IM10} \cite{IM11b}.

Here we mention several further researches in the theory of $M$-functions.
Let $D$ a fundamental discriminant, and $\chi_D$ the associated real character.
Mourtada and Murty \cite{MM15} studied the value-distribution of $(L'/L)(\sigma,\chi_D)$
(where $\sigma>1/2$) in $D$-aspect, and proved a limit formula similar to \eqref{3-1}
under the assumption of the Riemann hypothesis.
Akbary and Hamieh \cite{AH18} proved an analogous result for the cubic character case,
without the assumption of the Riemann hypothesis.

As for the value-distribution of $(\zeta'/\zeta)(s)$ in $t$-aspect, there is another
approach due to Guo \cite{Guo96a} \cite{Guo96b}.     Inspired by the idea of Guo,
Mine \cite{Mine1} proved the existence (and the explicit construction) of 
the $M$-function for $(\zeta'_K/\zeta_K)(s)$
in $t$-aspect, where $\zeta_K(s)$ denotes the Dedekind zeta-function of an algebraic
number field $K$ (including the non-Galois case), with an explicit error estimate in
the limit formula of the form \eqref{3-1}.
In \cite{Mine3}, he extended the result to the case of more general $L$-functions,
belonging to a certain subclass of 
$\mathscr{M}$.

In his another paper \cite{Mine2}, Mine treated the limit theorem of Bohr-Jessen type
(but without taking logarithm) for Lerch zeta-functions, and proved a refinement,
written in terms of the associated $M$-function.    This paper of Mine implies that
the theory of $M$-functions works for zeta-functions without Euler products.

Suzuki \cite{Suz15} discovered that certain $M$-function appears even in a rather
different context.     He studied the zeros of the real or imaginary part of
\[
\xi(s)=\frac{1}{2}s(s-1)\pi^{-s/2}\Gamma(s/2)\zeta(s),
\]
and proved that the distribution of spacings of the second-order normalization of 
imaginary parts of those zeros can be represented by an integral involving the
$M$-function for $(\zeta'/\zeta)(s)$.

\section{The value-distribution of automorphic $L$-functions (the modulus and level
aspects)}\label{sec4}

At the end of the preceding section we watched that $M$-functions have been studied
for various zeta and $L$-functions.     Since one of the most important classes of
$L$-functions is the class of automorphic $L$-functions, it is natural to ask how is
the theory of $M$-functions associated with automorphic $L$-functions.

First we fix the notation.
Let $k$ be an even integer and $N$ positive integer,
and let $S_k(N)$ be the set of cusp forms of weight $k$ 
for $\Gamma_0(N)$.
We write the Fourier expansion of $f\in S_k(N)$ as
\[
f(z)=\sum_{n=1}^{\infty}\lambda_f(n)n^{(k-1)/2}e^{2\pi inz},
\]
and define the attached $L$-function by
\[
L(f,s)=\sum_{n=1}^{\infty}\lambda_f(n)n^{-s}.
\]
Now we assume that $f\in S_k(N)$ is a primitive form, that is, a normalized Hecke-eigen
newform.    Then $L(s,f)$ has the Euler product
\begin{align}\label{4-1}
L(f,s)&=\prod_{p|N}(1-\lambda_f(p)p^{-s})^{-1}
   \prod_{p\nmid N}(1-\lambda_f(p)p^{-s}+p^{-2s})^{-1}\\
&=\prod_{p|N}(1-\lambda_f(p)p^{-s})^{-1}
   \prod_{p\nmid N}(1-\alpha_f(p)p^{-s})^{-1}(1-\beta_f(p)p^{-s})^{-1},\notag
\end{align}
where $|\alpha_f(p)|=1$, $\beta_f(p)=\overline{\alpha_f(p)}$, and
$\alpha_f(p)+\beta_f(p)=\lambda_f(p)$ (for $p\nmid N$).

First consider the modulus aspect.
Let $\chi$ be a Dirichlet character.
The twisted $L$-function $L(f\otimes\chi,s)$ is defined by replacing
$p^{-s}$ by $\chi(p)p^{-s}$ on each local factor.
Lebacque and Zykin \cite{LZ} developed the theory similar to \cite{IM11b} for
$L(f\otimes\chi,s)$, and proved the limit formulas corresponding to \eqref{3-1}
and \eqref{3-4}.

More difficult is the case of the level aspect.    So far there are two attempts
in this direction, the aforementioned paper of Lebacque and Zykin \cite{LZ}, and an
article of the authors \cite{MU1}.    Here we briefly mention the results proved in
\cite{MU1}.

Let $\gamma\in\mathbb{N}$, and define the (partial) $\gamma$th symmetric power $L$-function
attached to $f$ by
\begin{equation}\label{4-2}
L_N(\mathrm{Sym}_f^{\gamma},s)=
\prod_{p\nmid N}\prod_{h=0}^{\gamma}(1-\alpha_f^{\gamma-h}(p)\beta_f^h(p)p^{-s})^{-1}.
\end{equation}

Here we consider the situation $N=q^m$, where $q$ is a prime number.
Then the form of the right-hand side of \eqref{4-2} is the same for all $m$, which we 
denote by
\begin{equation}\label{4-3}
L_q(\mathrm{Sym}_f^{\gamma},s)=
\prod_{p\neq q}\prod_{h=0}^{\gamma}(1-\alpha_f^{\gamma-h}(p)\beta_f^h(p)p^{-s})^{-1}.
\end{equation}
Let $\mu,\nu\in\mathbb{N}$, $\mu-\nu=2$.    
By $Q(\mu)$ we denote the smallest prime number satisfying $2^{\mu}/\sqrt{Q(\mu)}<1$.
The main results in \cite{MU1} is the limit 
formula for the value-distribution of the difference
\[
\log L_q(\mathrm{Sym}_f^{\mu},\sigma)-\log L_q(\mathrm{Sym}_f^{\nu},\sigma)
\qquad(\sigma>1/2).
\]

In the proof of limit theorems mentioned in the present article, some kind of
``independence'' or ``orthogonality'' properties are necessary.
For example, in the proof of Theorem \ref{thm-BJ} and Theorem \ref{thm-M90}, the
Kronecker-Weyl theorem on the uniform distribution of sequences is used.
Ihara's argument \cite{Iha08} for $L$-functions is based on the orthogonality of
Dirichlet characters.
In the present situation, the necessary tool is supplied by Petersson's formula,
in the form shown in the second author's article \cite{Ich11}.
In view of the formula in \cite{Ich11}, we define the following weighted sum
for any sequence $\{A_f\}$ over primitive forms $f\in S_k(q^m)$:
\begin{equation}\label{4-4}
{\sum_f}^{\prime}A_f= \frac{1}{C_k(1-C_q(m))}\sum_f \frac{A_f}{\langle f,f\rangle_P},
\end{equation}
where 
\[
C_k=\frac{(4\pi)^{k-1}}{\Gamma(k-1)},\quad
C_q(m)=
\begin{cases}
    0, & m=1,\\
    q(q^2-1)^{-1},  & m=2,\\
    q^{-1},  &  m\geq 3,
\end{cases}
\]
the symbol $\langle,\rangle_P$ is the Petersson inner product, and the sum on the right-hand 
side of \eqref{4-4} runs over all primitive forms belonging to $S_k(q^m)$.

We define two types of averages in the level aspect.    The first one is
\begin{align}\label{4-5}
\lefteqn{\mathrm{Avg}_{\text{prime}}
\Psi(\log L_q(\mathrm{Sym}_f^{\mu},\sigma)-\log L_q(\mathrm{Sym}_f^{\nu},\sigma))}\\
&=\lim_{q\to\infty}{\sum_f}^{\prime}
\Psi(\log L_q(\mathrm{Sym}_f^{\mu},\sigma)-\log L_q(\mathrm{Sym}_f^{\nu},\sigma))\notag
\end{align}
for fixed $m$, where $\Psi:\mathbb{R}\to\mathbb{C}$ is a test function.
The second one is
\begin{align}\label{4-6}
\lefteqn{\mathrm{Avg}_{\text{power}}
\Psi(\log L_q(\mathrm{Sym}_f^{\mu},\sigma)-\log L_q(\mathrm{Sym}_f^{\nu},\sigma))}\\
&=\lim_{m\to\infty}{\sum_f}^{\prime}
\Psi(\log L_q(\mathrm{Sym}_f^{\mu},\sigma)-\log L_q(\mathrm{Sym}_f^{\nu},\sigma))\notag
\end{align}
for fixed $q$, where $q$ is a prime and $q\geq Q(\mu)$ if $1\geq\sigma >1/2$.

\begin{theorem}[\!\!\cite{MU1}]\label{thm-MU1}
Let $f\in S_k(N)$ be a primitive form, $2\leq k<12$ or $k=14$, and $N=q^m$ with a certain
prime $q$.   Let $\mu,\nu\in\mathbb{N}$, $\mu-\nu=2$.    We assume that the symmetric
power L-functions 
$L_q(\mathrm{Sym}_f^{\mu},s)$,
$L_q(\mathrm{Sym}_f^{\nu},s)$ can be continued
holomorphically to $\sigma>1/2$, satisfy the estimate
$\ll q^m(|t|+2)$ in the strip $2\geq\sigma>1/2$, and have no zero in
$1\geq\sigma>1/2$.    Then, for any $\sigma>1/2$, there exists a density function
$\mathcal{M}_{\sigma}:\mathbb{R}\to\mathbb{R}_{\geq 0}$ which can be explicitly
constructed, and for which the formula
\begin{align}\label{4-7}
\mathrm{Avg}_{\mathrm{prime}}
\lefteqn{\Psi(\log L_q(\mathrm{Sym}_f^{\mu},\sigma)-\log L_q(\mathrm{Sym}_f^{\nu},\sigma))}\\
&=\mathrm{Avg}_{\mathrm{power}}
\Psi(\log L_q(\mathrm{Sym}_f^{\mu},\sigma)-\log L_q(\mathrm{Sym}_f^{\nu},\sigma))\notag\\
&=\int_{\mathbb{R}}\mathcal{M}_{\sigma}(u)\Psi(u)\frac{du}{\sqrt{2\pi}}\notag
\end{align}
holds for any test function $\Psi$ which is bounded continuous, or a compactly
supported characteristic function.
\end{theorem}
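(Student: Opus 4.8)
The strategy is to realize the difference of logarithms as (the value of) an almost surely convergent random Euler product, to take $\mathcal{M}_{\sigma}$ to be the density of that random variable, and to transport the convergence of the arithmetic averages onto the probabilistic model by means of Petersson's formula. Writing $\alpha_f(p)=e^{i\theta_f(p)}$ and using that the Satake parameters of $\mathrm{Sym}_f^{\gamma}$ at $p\nmid N$ are $\{e^{i(\gamma-2h)\theta_f(p)}\}_{h=0}^{\gamma}$, the hypothesis $\mu-\nu=2$ gives the telescoping
\[
\frac{\prod_{h=0}^{\mu}(1-e^{i(\mu-2h)\theta_f(p)}p^{-s})^{-1}}{\prod_{h=0}^{\nu}(1-e^{i(\nu-2h)\theta_f(p)}p^{-s})^{-1}}=(1-e^{i\mu\theta_f(p)}p^{-s})^{-1}(1-e^{-i\mu\theta_f(p)}p^{-s})^{-1},
\]
so that, for real $\sigma$,
\[
\log L_q(\mathrm{Sym}_f^{\mu},\sigma)-\log L_q(\mathrm{Sym}_f^{\nu},\sigma)=-\sum_{p\neq q}\log\bigl(1-2\cos(\mu\theta_f(p))\,p^{-\sigma}+p^{-2\sigma}\bigr)=:S(f),
\]
a real quantity whose $j$-th Euler coefficient at $p$ equals $\frac1j\,2\cos(j\mu\theta_f(p))=\frac1j\bigl(\lambda_f(p^{j\mu})-\lambda_f(p^{j\mu-2})\bigr)$ by the Hecke relations (equivalently the identity $2\cos n\theta=\lambda_f(p^n)-\lambda_f(p^{n-2})$ for $\lambda_f(p)=2\cos\theta$). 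Under the hypotheses of holomorphic continuation to $\sigma>1/2$, the growth bound $\ll q^m(|t|+2)$, and non-vanishing in $1\geq\sigma>1/2$, the left-hand side extends to all $\sigma>1/2$ and agrees with this series in the averaged sense used below. On the probabilistic side let $(\Theta_p)$ be independent, $\Theta_p$ distributed by the $p$-adic Plancherel measure $\mu_p$ on $[0,\pi]$ (the measure governing $\lambda_f(p)=2\cos\theta_f(p)$ in the level limit), and set
\[
X_p:=-\log\bigl(1-2\cos(\mu\Theta_p)p^{-\sigma}+p^{-2\sigma}\bigr).
\]
Since the Chebyshev moments of $\mu_p$ are $\ll 1/p$ one has $\mathbb{E}X_p\ll p^{-1-\sigma}+p^{-2\sigma}$ and $\operatorname{Var}X_p\ll p^{-2\sigma}$, both summable for $\sigma>1/2$, so by Kolmogorov's theorem $\mathcal{X}_{\sigma}:=\sum_{p\neq q}X_p$ converges almost surely; and since the excised factor at $q$ contributes only $O(q^{-\sigma})$, the prime aspect $q\to\infty$ leads to the same limiting law. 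I would take $\mathcal{M}_{\sigma}$ to be its density, obtained by Fourier inversion of $\prod_{p\neq q}\widehat{X_p}$, where each $\widehat{X_p}(\xi)=\int_0^{\pi}e^{i\xi X_p(\theta)}\,d\mu_p(\theta)\ll_p|\xi|^{-1/2}$ by stationary phase (the phase has only nondegenerate critical points); hence any finite subproduct already decays faster than any power of $|\xi|$, making $\mathcal{M}_{\sigma}$ a well-defined continuous, indeed $C^{\infty}$, nonnegative function.

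The core is a two-stage comparison. For a cutoff $y$ put $S_y(f):=-\sum_{p\leq y,\,p\neq q}\log(1-2\cos(\mu\theta_f(p))p^{-\sigma}+p^{-2\sigma})$ and $\mathcal{X}_{\sigma,y}:=\sum_{p\leq y,\,p\neq q}X_p$. First I would show, for each fixed $y$, that $S_y(f)$ converges in distribution to $\mathcal{X}_{\sigma,y}$ under $\sum_f^{\prime}$ in both limits $q\to\infty$ (with $m$ fixed) and $m\to\infty$ (with $q$ fixed), via L\'evy's continuity theorem: after separating the absolutely convergent tail $\sum_{j\geq2}$ (a bounded function of $f$ that can be absorbed into the random model), $e^{i\xi S_y(f)}$ expands, through $-\log(1-w)=\sum w^r/r$ and Hecke multiplicativity, into a finite linear combination of products $\prod_{p\leq y}\lambda_f(p^{a_p})$ with coefficients polynomial in the $p^{-\sigma}$; Petersson's formula in the form of \cite{Ich11} evaluates each such sum over $f$ as the product of the corresponding $\mu_p$-moments plus an error tending to $0$ as the level $q^m\to\infty$, and resumming gives $\sum_f^{\prime}e^{i\xi S_y(f)}\to\prod_{p\leq y,\,p\neq q}\widehat{X_p}(\xi)$. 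This is the step supplying the ``independence/orthogonality'' input, playing the role that the Kronecker--Weyl theorem plays for Theorems~\ref{thm-BJ} and \ref{thm-M90} and that character orthogonality plays in Ihara's work; the restrictions $2\leq k<12$ or $k=14$ (so that $S_k(1)$ is at most one-dimensional) keep the old/new-form bookkeeping in Petersson's formula transparent, and $q\geq Q(\mu)$, i.e. $2^{\mu}/\sqrt{q}<1$, is what forces the relevant Petersson error terms — built from Kloosterman sums and Bessel functions at arguments $\sqrt{mn}/c$ with $m,n$ prime powers attached to $\mathrm{Sym}^{\mu}$ — to be genuinely small in the range $1\geq\sigma>1/2$.

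Second, one must let $y\to\infty$. On the probabilistic side $\mathcal{X}_{\sigma,y}\to\mathcal{X}_{\sigma}$ in distribution is immediate. On the arithmetic side I would prove that $\sum_f^{\prime}|S(f)-S_y(f)|^2$ is $o(1)$ as $y\to\infty$, uniformly in the level: the hypotheses put $L_q(\mathrm{Sym}_f^{\mu},s)/L_q(\mathrm{Sym}_f^{\nu},s)$ into (a level-uniform version of) the class $\mathscr{M}$ of Section~\ref{sec2}, so a Perron/contour-shift argument — pushing the contour to the left of $\sigma=1/2$ and using that in a zero-free region $\log$ of such an $L$-function is $\ll\log(q^m(|t|+2))$ — represents $S(f)-S_y(f)$ by tails that are small once averaged over $f$ by Petersson's formula again. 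Combining the two stages by the usual three-$\varepsilon$ argument, exactly as in the passage from truncated to full Euler products in \cite{Mat90}, gives $S(f)\to\mathcal{X}_{\sigma}$ in distribution under both $\mathrm{Avg}_{\mathrm{prime}}$ and $\mathrm{Avg}_{\mathrm{power}}$, which is \eqref{4-7} for bounded continuous $\Psi$. Finally, since $\mathcal{M}_{\sigma}$ is continuous, the law of $\mathcal{X}_{\sigma}$ has no atoms and gives zero mass to the boundary of any compact set, so sandwiching the characteristic function of a compact $K$ (or of its complement) between bounded continuous functions and invoking the portmanteau theorem extends \eqref{4-7} to those test functions.

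The main obstacle is the level-uniform tail bound $\sum_f^{\prime}|S(f)-S_y(f)|^2\to0$ valid all the way down to $\sigma=1/2$: one has to approximate $\log$ of the symmetric-power $L$-functions by short Dirichlet polynomials with an error small on average over $f$, using only the coarse analytic data assumed (continuation, polynomial growth, zero-freeness) and no Riemann Hypothesis, while simultaneously keeping the Petersson error from the Dirichlet-polynomial part under control; it is here that the device $\mu-\nu=2$ — which turns the object into the logarithm of a degree-two Euler product with bounded Euler coefficients $2\cos(j\mu\theta_f(p))$ — together with $q\geq Q(\mu)$ and the weight restriction are essential. A smaller point still needing care is the reconciliation of the two aspects: for $\mathrm{Avg}_{\mathrm{prime}}$ the omitted Euler factor at the growing prime $q$ contributes $O(q^{-\sigma})$ and drops out, so the same $\mathcal{M}_{\sigma}$ indeed serves for both averages.
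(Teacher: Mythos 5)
Your proposal follows essentially the same route as the source: the paper itself only quotes this theorem from \cite{MU1}, but the surrounding discussion identifies exactly the ingredients you use --- the telescoping identity turning $\log L_q(\mathrm{Sym}_f^{\mu},s)-\log L_q(\mathrm{Sym}_f^{\nu},s)$ into the logarithm of a degree-two Euler product with parameters $\alpha_f^{\mu}(p),\beta_f^{\mu}(p)$, Petersson's formula in the form of \cite{Ich11} as the orthogonality input replacing Kronecker--Weyl/character orthogonality, the role of $q\geq Q(\mu)$ and of the weight restriction (which makes $S_k(1)$ trivial), and the construction of $\mathcal{M}_{\sigma}$ by Fourier inversion of the limiting random Euler product. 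The only blemishes are cosmetic (a finite subproduct of local characteristic functions decays like $|\xi|^{-N/2}$, not faster than every power --- rapid decay needs the full product --- and the first Chebyshev moment of the local measure is $O(p^{-(\mu-2)/2})$ rather than $O(p^{-1})$ in general), neither of which affects the argument.
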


In this theorem we require several assumptions, which are plausible but seem very
difficult to prove.     The main reason of using those assumptions is that we have
no idea of showing suitable mean value estimates for symmetric power $L$-functions.
\par
For any $\sigma >1$, since $\mu-\nu=2$, we have
\begin{align*}
&  \log L_q(\textrm{Sym}_f^{\mu}, s) - \log L_q(\textrm{Sym}_f^{\nu}, s)
\\
=&\sum_{p\neq q}(-\log(1-\alpha_f^{\mu}(p)p^{-s})
  -\log(1-\beta_f^{\mu}(p)p^{-s})).
\end{align*}
%
%
If we could find a method for the study of $\mathrm{Avg}_{\text{prime}}$ and
$\mathrm{Avg}_{\text{power}}$ of the right-hand side of the above equation
in the case $\mu=1$,
it would imply the limit theorem for
$\log L(f,s)$ similar to \eqref{3-4},
but at present we cannot extend the theorem to 
$\log L(f,s)$.
(The theorem is shown only for $\mu\geq 3$.)

Lebacque and Zykin \cite{LZ} studied $\log L(f,s)$ and $(L'/L)(f,s)$ along the line of
\cite{IM11b}, and obtained a result analogous to \cite[Theorem 1]{IM11b}.
However their argument also does not arrive at the limit theorem for
$\log L(f,s)$ or $(L'/L)(f,s)$ of the form \eqref{3-1} or \eqref{3-4}.

\section{The value-distribution of automorphic $L$-functions (the 
$t$-aspect)}\label{sec5}

Now we return to the matter of $t$-aspect.     As we mentioned in Section \ref{sec2},
the part (ii) of Theorem \ref{thm-BJ} has been generalized 
only for some special cases when convex properties can be used.

Automorphic $L$-functions are typical examples for which the corresponding curves are
not always convex, so it is important how to generalize the part (ii) of
Theorem \ref{thm-BJ} to the case of automorphic $L$-functions $L(f,s)$.
This has been done in \cite{MU2}.

Since 
$L(f,s)\in\mathscr{M}_{00}$,
the existence of the limit
$W_{\sigma}(R;L(f,\cdot))$ (for $\sigma>1/2$) is already known by Theorem \ref{thm-M90}.

\begin{theorem}[\!\!\cite{MU2}]\label{thm-MU2}
For any $\sigma>1/2$, there exists a continuous non-negative function
${\mathcal M}_{\sigma}(z,L(f,\cdot))$, explicitly defined on $\mathbb{C}$, for which
\begin{equation}\label{5-1}
W_{\sigma}(R;L(f,\cdot))=\int_R {\mathcal M}_{\sigma}(z,L(f,\cdot))|dz|
\end{equation}
holds.
\end{theorem}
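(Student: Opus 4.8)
The plan is to follow the probabilistic route pioneered by Bohr--Jessen and systematized in the $M$-function literature: realize $\log L(f,\sigma+it)$ as the limit of a sequence of random sums indexed by finite sets of primes, identify the limiting random variable, and show that its distribution is absolutely continuous with respect to the planar Lebesgue measure $|dz|=dxdy/2\pi$, with a density that can be written down explicitly. Concretely, for each prime $p\nmid N$ write the local curve
\[
z_p(\theta;L(f,\cdot))=-\log(1-\alpha_f(p)p^{-\sigma}e^{2\pi i\theta})-\log(1-\beta_f(p)p^{-\sigma}e^{2\pi i\theta}),
\]
(in the notation of \eqref{2-6}, with $\beta_f(p)=\overline{\alpha_f(p)}$, $|\alpha_f(p)|=1$), and for a parameter $M$ let $S_M=\sum_{p\le M,\,p\nmid N}z_p(\theta_p)$ where $(\theta_p)_p$ are independent random variables, each uniform on $[0,1)$ (equivalently, we push forward normalized Lebesgue measure on the torus under the map \eqref{2-5}). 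Since $L(f,\cdot)\in\mathscr{M}_{00}$, Theorem~\ref{thm-M90} already gives that $W_\sigma(R;L(f,\cdot))$ exists and equals the law of the $\mathbb{C}$-valued random variable $S_\infty=\lim_{M\to\infty}S_M$ evaluated on $R$; so the entire content of the theorem is the \emph{absolute continuity} of this law together with an explicit formula for its density ${\mathcal M}_\sigma(z,L(f,\cdot))$.

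The key steps, in order, are as follows. (1) Establish the a.e.\ convergence and $L^2$-convergence of the partial sums $S_M$ to $S_\infty$, which follows from the rapid decay $z_p(\theta)=O(p^{-\sigma})$ for $\sigma>1/2$ together with the trivial bound $|\alpha_f(p)|=|\beta_f(p)|=1$; this is routine. (2) Pass to characteristic functions (Fourier transforms): the characteristic function of $S_M$ factors as $\widetilde{\Phi}_M(\xi)=\prod_{p\le M,\,p\nmid N}\widetilde{\phi}_p(\xi)$, where $\widetilde{\phi}_p(\xi)=\int_0^1 \exp\big(i\,\Re(\bar\xi\, z_p(\theta))\big)\,d\theta$ is a Bessel-type integral, and the infinite product converges to the characteristic function $\widetilde{\Phi}_\infty(\xi)$ of $S_\infty$. (3) Show $\widetilde{\Phi}_\infty\in L^1(\mathbb{C})$, so that Fourier inversion produces a continuous density; then define
\[
{\mathcal M}_\sigma(z,L(f,\cdot))=\int_{\mathbb{C}} \widetilde{\Phi}_\infty(\xi)\,\exp\big(-i\,\Re(\bar\xi z)\big)\,|d\xi|,
\]
which is manifestly continuous, and nonnegative because it is the density of a genuine probability measure. (4) Verify that the resulting identity \eqref{5-1} holds for rectangles $R$ (and in fact that ${\mathcal M}_\sigma$ integrates to $1$), by combining the weak convergence of the laws of $S_M$ with the absolute continuity just proved, so that boundary effects of $R$ are negligible.

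The main obstacle is step (3): proving integrability of $\widetilde{\Phi}_\infty(\xi)$ when the local curves $z_p(\theta)$ are \emph{not convex}. In the classical Bohr--Jessen setting and in the Dirichlet / Galois--Dedekind cases (Joyner \cite{Joy86}, Matsumoto \cite{Mat92a}), one exploits convexity of the curve $\theta\mapsto z_p(\theta)$ to get, via a van der Corput / stationary-phase estimate, a bound $|\widetilde{\phi}_p(\xi)|\le 1-c\min(1,p^{-\sigma}|\xi|)$ or similar, which when multiplied over many primes forces Gaussian-type decay of $\widetilde{\Phi}_M(\xi)$ in $|\xi|$. Here the curve from the degree-two local factor $(1-\alpha_f(p)X)(1-\beta_f(p)X)=1-\lambda_f(p)X+X^2$ can fail to be convex (its curvature can vanish), so the single-prime estimate degrades. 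The remedy — following \cite{MU2} — is to show that convexity fails only on a controlled subset of the $\theta$-range (the curvature vanishes at finitely many points with quantitative nondegeneracy), so that a more careful oscillatory-integral bound still yields $|\widetilde{\phi}_p(\xi)|\le 1-c\,p^{-\sigma}|\xi|^{\,\kappa}$ for some $\kappa<1$ on a set of $\theta$ of positive measure, uniformly in $p$; combined over primes $p\le |\xi|^{1/\sigma}$ this still produces super-polynomial decay of $\widetilde{\Phi}_\infty(\xi)$, enough for $L^1$. One must also check that the sum/product over primes is not spoiled by the finitely many bad primes $p\mid N$, which is trivial since only finitely many Euler factors are affected and each contributes a bounded, integrable correction. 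Once the integrability is in hand, steps (1), (2), and (4) are standard applications of Fourier analysis and weak convergence, and the explicit construction of ${\mathcal M}_\sigma(z,L(f,\cdot))$ as the Fourier inverse of the convergent infinite product of Bessel-type local factors completes the proof.
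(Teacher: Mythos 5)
Your overall architecture is the same as the paper's: reduce everything to the absolute continuity of the limit law, write its characteristic function as the infinite product of the local integrals $K_n(w;\varphi)=\int_0^1\exp(i\langle z_n(\theta),w\rangle)\,d\theta$, prove enough decay to make it $L^1$, and define ${\mathcal M}_\sigma$ by Fourier inversion (your formula for the density is exactly \eqref{5-5}--\eqref{5-6}). But your step (3) has a genuine gap precisely at the point you identify as the main obstacle. You claim a bound $|\widetilde{\phi}_p(\xi)|\le 1-c\,p^{-\sigma}|\xi|^{\kappa}$ ``uniformly in $p$'' by arguing that the curvature of the local curve vanishes only at finitely many quantitatively nondegenerate points. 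This uniformity is not available: the local curve at a good prime is
\[
z_p(\theta)=\sum_{j\ge 1}\frac{\alpha_f(p)^j+\beta_f(p)^j}{j}\,p^{-j\sigma}e^{2\pi ij\theta},
\]
whose leading Fourier coefficient is $r_{1,p}=\lambda_f(p)$, and by Deligne's bound $\lambda_f(p)$ ranges over $[-2,2]$ and can be arbitrarily close to $0$. As $\lambda_f(p)\to 0$ the $j=1$ term no longer dominates, the stationary-point structure of the phase $g_{\tau,p}(\theta)$ degenerates (the zeros of $g'_{\tau,p}$ and $g''_{\tau,p}$ collide), and the constants in any first/second derivative test blow up. So a single-prime estimate valid for \emph{all} $p$, which your strategy of multiplying over all $p\le|\xi|^{1/\sigma}$ requires, cannot be obtained this way.

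The paper's proof avoids this by needing far less: by Lemma \ref{lem-MU2} it suffices to have \emph{five} primes with $K_n(w)=O(|w|^{-1/2})$, since five such factors give $|w|^{-5/2}$ decay, which is integrable on $\mathbb{C}$. The Jessen--Wintner-type estimate \eqref{5-9} (Lemma \ref{lem2-MU2}, and its general form Lemma \ref{lem-JW}) is proved only under the lower bound $|r_{1,n}|\ge C$, i.e.\ $|\lambda_f(p)|$ bounded away from $0$; under that hypothesis one splits $[0,1)$ into four arcs on which either $|g'_{\tau,n}|$ or $|g''_{\tau,n}|$ is bounded below and applies the first and second derivative tests. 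The missing ingredient in your write-up is then the \emph{arithmetic} input guaranteeing that such primes exist in abundance: the set $\mathbb{P}_f(\varepsilon)=\{p:\,|\lambda_f(p)|>\sqrt2-\varepsilon\}$ has positive density, which is a nontrivial theorem of M.~R.~Murty and Murty--Murty \cite{RM83}, \cite{RMKM} (in the later sections of the paper the analogous role is played by the Sato--Tate theorem). Without either a uniform-in-$p$ oscillatory estimate (which fails) or this density statement for primes with $|\lambda_f(p)|$ bounded below, the $L^1$ bound on $\widetilde{\Phi}_\infty$ is not established.
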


\begin{remark}\label{rem-MU2}
Once \eqref{5-1} is proved, then we can deduce
\begin{align}\label{5-1bis}
\lim_{T\to\infty}\frac{1}{2T}\int_{-T}^T \Phi(\log L(f,s+i\tau))d\tau
=\int_{\mathbb{C}}{\mathcal M}_{\sigma}(z,L(f,\cdot))\Phi(z)|dz|
\end{align}
for any test function $\Phi$ as in the statement of Theorem \ref{thm-IM1114}, by the 
argument given in \cite[Remark 9.1]{IM11a}.
Note that the left-hand side of \eqref{5-1bis} is the function of variable $s$,
but the $M$-function on the right-hand side depends only on $\sigma=\Re s$.
\end{remark}

The basic structure of the proof of Theorem \ref{thm-MU2} in \cite{MU2},
which we briefly outline here, is 
along the line similar to \cite{Mat92a}.
Actually in \cite{MU2}, we are working in more general situation, that is in the class 
$\mathscr{M}$.
Let $\varphi\in\mathscr{M}$.
Define the integral
\begin{equation}\label{5-2}
K_n(w;\varphi)=\int_0^1 \exp\left(i\langle z_n(\theta_n;\varphi),w\rangle\right)d\theta_n
\qquad (n\in\mathbb{N}),
\end{equation}
where $z_n(\theta_n;\varphi)$ is defined by \eqref{2-6} and
$\langle z,w\rangle=\Re z \Re w + \Im z \Im w$.
In \cite{MU2}, we prove the following

\begin{lemma}[\!\!\cite{MU2}]\label{lem-MU2}
If there are at least five $n$'s for which
\begin{equation}\label{5-3}
K_n(w;\varphi)=O_n(|w|^{-1/2}) \qquad (|w|\to\infty)
\end{equation}
holds, then we can find a continuous non-negative function
${\mathcal M}_{\sigma}(z,\varphi)$ for $\sigma>\sigma_0$
by which we can write
\begin{equation}\label{5-4}
W_{\sigma}(R;\varphi)=\int_R {\mathcal M}_{\sigma}(z,\varphi) |dz|.
\end{equation}
Moreover ${\mathcal M}_{\sigma}(z,\varphi)$ is explicitly given by
\begin{equation}\label{5-5}
{\mathcal M}_{\sigma}(z,\varphi)=\int_{\mathbb{C}}e^{-i\langle z,w\rangle}
\Lambda(w;\varphi)|dw|,
\end{equation}
where
\begin{equation}\label{5-6}
\Lambda(w;\varphi)=\int_{\mathbb{C}}e^{i\langle z,w\rangle}dW_{\sigma}(z;\varphi).
\end{equation}
\end{lemma}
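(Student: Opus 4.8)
The plan is to establish Lemma~\ref{lem-MU2} by realizing $W_{\sigma}(\cdot;\varphi)$ as the distribution of a convergent sum of independent $\mathbb{C}$-valued random variables and then invoking a Fourier-inversion argument à la Jessen--Wintner, with the hypothesis \eqref{5-3} supplying exactly the decay needed to make the inversion legitimate. First I would recall from the proof of Theorem~\ref{thm-M90} (in the form used in \cite{MU2}) that $W_{\sigma}(\cdot;\varphi)$ is a Borel probability measure on $\mathbb{C}$ and that it coincides with the law of the random variable $X=\sum_{n=1}^{\infty} z_n(\vartheta_n;\varphi)$, where $\vartheta_1,\vartheta_2,\dots$ are independent random variables uniformly distributed on $[0,1)$ and the series converges almost surely (this convergence is part of what is established for $\varphi\in\mathscr{M}$). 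Consequently its characteristic function factors:
\begin{equation}\label{plan-1}
\Lambda(w;\varphi)=\int_{\mathbb{C}}e^{i\langle z,w\rangle}\,dW_{\sigma}(z;\varphi)=\prod_{n=1}^{\infty}K_n(w;\varphi),
\end{equation}
with $K_n$ as in \eqref{5-2}. Each factor satisfies $|K_n(w;\varphi)|\le 1$, and the infinite product converges because the tail of the series for $X$ is small in $L^2$; so \eqref{5-6} and \eqref{plan-1} are well defined.

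Next I would use the decay hypothesis. If $K_{n_1},\dots,K_{n_5}$ all obey \eqref{5-3}, then from \eqref{plan-1} we get the crude but decisive bound
\begin{equation}\label{plan-2}
|\Lambda(w;\varphi)|\le\prod_{j=1}^{5}|K_{n_j}(w;\varphi)|\le C\,|w|^{-5/2}\qquad(|w|\to\infty),
\end{equation}
for a suitable constant $C$ depending on the five chosen indices. Since $|w|^{-5/2}$ is integrable against $|dw|$ on a neighbourhood of infinity in $\mathbb{C}$ (the exponent $5/2>2$), and $\Lambda$ is bounded and continuous everywhere, we conclude $\Lambda(\cdot;\varphi)\in L^1(\mathbb{C})$. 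That integrability is the entire point of asking for \emph{five} good factors rather than one: it is what lets the Fourier inversion in \eqref{5-5} make sense and produce a bounded continuous output.

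Then I would define $\mathcal{M}_{\sigma}(z,\varphi)$ by the absolutely convergent integral \eqref{5-5} and check its properties. Continuity and boundedness of $\mathcal{M}_{\sigma}$ follow from $\Lambda\in L^1$ together with dominated convergence applied to $z\mapsto e^{-i\langle z,w\rangle}\Lambda(w;\varphi)$. To get \eqref{5-4}, I would convolve $W_{\sigma}$ with a Gaussian mollifier of width $\varepsilon$: the mollified measure $W_{\sigma}*G_{\varepsilon}$ has density $\int_{\mathbb{C}}e^{-i\langle z,w\rangle}e^{-\varepsilon|w|^2}\Lambda(w;\varphi)|dw|$ by the classical inversion theorem, and as $\varepsilon\to 0$ this converges uniformly to $\mathcal{M}_{\sigma}(z,\varphi)$ by \eqref{plan-2} and dominated convergence; meanwhile $W_{\sigma}*G_{\varepsilon}\to W_{\sigma}$ weakly. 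Identifying the limits shows that $W_{\sigma}(\cdot;\varphi)$ is absolutely continuous with density $\mathcal{M}_{\sigma}(z,\varphi)$, which is precisely \eqref{5-4}; non-negativity of $\mathcal{M}_{\sigma}$ is then automatic since it is a probability density. The main obstacle, and the step I would spend the most care on, is the rigorous justification of \eqref{plan-1} and of the mollification limit: one must confirm that the product representation of the characteristic function is valid for the measure coming from Theorem~\ref{thm-M90} (i.e., that the probabilistic model genuinely reproduces $W_{\sigma}$, including the meromorphic-continuation subtleties in the definition of $\log\varphi$), and that the interchange of limit and integral in passing from $W_{\sigma}*G_{\varepsilon}$ to $W_{\sigma}$ is underwritten uniformly by the bound \eqref{plan-2} rather than merely pointwise. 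Everything else is routine Fourier analysis once those two points are nailed down.
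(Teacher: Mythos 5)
Your proposal is correct and follows essentially the same route as the source \cite{MU2} (and \cite{Mat92a}), which the paper itself only cites rather than reproves: identify $W_{\sigma}$ with the law of the random series $\sum_n z_n(\vartheta_n;\varphi)$, factor the characteristic function $\Lambda(w;\varphi)=\prod_n K_n(w;\varphi)$, use $|K_n|\le 1$ together with five factors of size $O(|w|^{-1/2})$ to get $\Lambda\in L^1(\mathbb{C})$ (since $5/2>2=\dim_{\mathbb{R}}\mathbb{C}$), and conclude by Fourier inversion. Your diagnosis of the two delicate points --- the validity of the product representation for the measure produced by Theorem~\ref{thm-M90}, and the justification of the inversion/mollification limit --- matches where the actual work lies in \cite{MU2}.
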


Therefore the main problem is reduced to the proof of \eqref{5-3}.
Jessen and Wintner \cite{JW35} proved that $K_n(w,\varphi)=O(|w|^{-1/2})$ for any $n$, when
the corresponding curves are convex.
This is the original Jessen-Wintner inequality.

Now consider the case of automorphic $L$-functions.    Let
$\mathbb{P}_f(\varepsilon)$ be the set of primes $p$ satisfying
$|\lambda_f(p)|>\sqrt{2}-\varepsilon$.    Then $\mathbb{P}_f(\varepsilon)$ is of
positive density (M. R. Murty \cite{RM83} for the full modular case, and
M. R. Murty and V. K. Murty \cite{RMKM} for any level $N$).
In \cite{MU2}, we observed geometric behavior of the corresponding curves and proved

\begin{lemma}[\!\!\cite{MU2}]\label{lem2-MU2}
If $p_n\in\mathbb{P}_f(\varepsilon)$ and $n$ is sufficiently large, then
\begin{equation}\label{5-9}
K_n(w;L(f,\cdot))=O_{\varepsilon}\left(p_n^{\sigma/2}|w|^{-1/2}
+p_n^{\sigma}|w|^{-1}\right)
\end{equation}
holds.
\end{lemma}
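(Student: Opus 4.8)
The plan is to estimate the oscillatory integral
\[
K_n(w;L(f,\cdot))=\int_0^1 \exp\bigl(i\langle z_n(\theta;L(f,\cdot)),w\rangle\bigr)\,d\theta
\]
by the classical van der Corput / stationary-phase technique, treating $|w|\to\infty$ with $w=|w|e^{i\psi}$. Writing $z_n(\theta;L(f,\cdot))=-\log(1-\alpha_f(p_n)p_n^{-\sigma}e^{2\pi i\theta})-\log(1-\beta_f(p_n)p_n^{-\sigma}e^{2\pi i\theta})$ (using $\beta_f=\overline{\alpha_f}$ and $|\alpha_f|=1$, so this term matches \eqref{2-6} with $g(p_n)=2$, $f(k,n)=1$), the phase of the integrand is $\Phi_w(\theta)=\langle z_n(\theta),w\rangle=|w|\,\langle z_n(\theta),e^{i\psi}\rangle$. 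First I would expand $z_n(\theta)$ as a power series in $p_n^{-\sigma}$; the leading behaviour is governed by $-\log(1-\alpha_f(p_n)p_n^{-\sigma}e^{2\pi i\theta})-\log(1-\overline{\alpha_f(p_n)}p_n^{-\sigma}e^{2\pi i\theta}) = 2\,\mathrm{Re}\,\alpha_f(p_n)\,p_n^{-\sigma}e^{2\pi i\theta}+O(p_n^{-2\sigma})$ once we track constants carefully. The key point is that, to first order in $p_n^{-\sigma}$, the curve $\theta\mapsto z_n(\theta)$ is (up to a linear reparametrisation) an ellipse of semi-axes of size comparable to $p_n^{-\sigma}$, hence strictly convex, and the hypothesis $p_n\in\mathbb P_f(\varepsilon)$, i.e. $|\lambda_f(p_n)|>\sqrt2-\varepsilon$, forces this ellipse to be non-degenerate with curvature bounded below by a quantity $\gg_\varepsilon p_n^{-\sigma}$ (this is exactly why the set $\mathbb P_f(\varepsilon)$ enters).

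Next I would make this quantitative. The second derivative satisfies $\Phi_w''(\theta)=|w|\,\langle z_n''(\theta),e^{i\psi}\rangle$, and $z_n''(\theta)= -(2\pi)^2\bigl(2\,\mathrm{Re}\,\alpha_f(p_n)e^{2\pi i\theta}\bigr)p_n^{-\sigma}+O(p_n^{-2\sigma})$ as a complex number tracing (a rescaling of) the same ellipse; its modulus is $\asymp p_n^{-\sigma}$ uniformly in $\theta$ provided $p_n$ is large enough that the $O(p_n^{-2\sigma})$ correction is dominated. The only obstruction to a lower bound for $|\Phi_w''(\theta)|$ is that $z_n''(\theta)$ could be (nearly) perpendicular to $e^{i\psi}$; but because $z_n''$ itself sweeps out a full ellipse as $\theta$ ranges over an interval of length $1$, for any fixed direction $\psi$ the set of $\theta$ on which $|\langle z_n''(\theta),e^{i\psi}\rangle|<\delta p_n^{-\sigma}$ has measure $O(\sqrt\delta)$. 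This is the standard sublevel-set estimate for the phase of a convex curve, and it is the heart of the Jessen–Wintner inequality; I would either cite \cite{JW35} for the convex-curve version applied to the leading ellipse and then perturb, or redo the van der Corput argument directly. Splitting $[0,1]$ into the ``bad'' set where $|\Phi_w''|$ is small and its complement, on the good set van der Corput's second-derivative test gives a contribution $\ll (|w|p_n^{-\sigma})^{-1/2}=p_n^{\sigma/2}|w|^{-1/2}$, while the bad set contributes $\ll p_n^{\sigma/2}|w|^{-1/2}$ as well by the same test once one tunes $\delta\asymp (|w|p_n^{-\sigma})^{-1}$, and the residual pieces near the (at most $O(1)$) stationary points of the ellipse contribute $\ll p_n^{\sigma}|w|^{-1}$ by the first-derivative test applied to the error terms — which is exactly the shape of the right-hand side of \eqref{5-9}.

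The remaining care concerns the $O(p_n^{-2\sigma})$ tail of the logarithm expansion: it contributes to $\Phi_w$ a term of size $|w|\,O(p_n^{-2\sigma})$, with first derivative $|w|\,O(p_n^{-2\sigma})$ and second derivative $|w|\,O(p_n^{-2\sigma})$. I would absorb this by choosing the threshold separating the regimes at $|w|\asymp p_n^{\sigma}$: for $|w|\le p_n^{\sigma}$ the trivial bound $|K_n|\le 1 \le p_n^{\sigma}|w|^{-1}$ already suffices, so we may assume $|w|>p_n^{\sigma}$, in which range the leading ellipse genuinely dominates the dynamics of the phase and the perturbation terms are lower order. Piecing the estimates together yields \eqref{5-9}. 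The main obstacle I anticipate is purely technical: keeping the dependence on $p_n$ explicit and uniform in the direction $\psi$ of $w$ while verifying that the $O(p_n^{-2\sigma})$ corrections never destroy the curvature lower bound — in other words, making ``sufficiently large $n$'' effective enough that all implied constants depend only on $\varepsilon$ (and $\sigma$), not on which prime $p_n\in\mathbb P_f(\varepsilon)$ we are at. Once \eqref{5-9} is established, Lemma \ref{lem2-MU2} combined with the positive density of $\mathbb P_f(\varepsilon)$ furnishes far more than the five indices required by Lemma \ref{lem-MU2}, which then delivers Theorem \ref{thm-MU2}.
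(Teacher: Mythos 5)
Your overall strategy --- expand $z_n(\theta)=-\log(1-\alpha_f(p_n)p_n^{-\sigma}e^{2\pi i\theta})-\log(1-\beta_f(p_n)p_n^{-\sigma}e^{2\pi i\theta})$ in powers of $p_n^{-\sigma}$, note that the leading term $\lambda_f(p_n)p_n^{-\sigma}e^{2\pi i\theta}$ traces a circle whose radius is $\gg_\varepsilon p_n^{-\sigma}$ precisely because $p_n\in\mathbb{P}_f(\varepsilon)$, and then estimate the oscillatory integral by van der Corput's derivative tests --- is exactly the route of the paper (compare the proof of Lemma \ref{lem-JW}, which is the general form of this statement with $r_{1,n}=\lambda_f(p_n)$). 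Your reduction to the range $|w|>p_n^{\sigma}$ and your absorption of the $O(p_n^{-2\sigma})$ tails are also in order.

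However, your treatment of the set where the second derivative of the phase is small does not produce the exponent $-1/2$, and this is a genuine gap rather than bookkeeping. To leading order $g_{\tau,n}''(\theta)\approx -(2\pi)^2\lambda_f(p_n)p_n^{-\sigma}\cos(2\pi\theta-\tau)$, so the relevant sublevel set $\{\theta: |\cos(2\pi\theta-\tau)|<\delta\}$ has measure $O(\delta)$ (simple zeros of cosine), not $O(\sqrt{\delta})$; and in either case, bounding the integral over the bad set trivially by its measure and balancing against the second-derivative test on its complement gives at best $(|w|p_n^{-\sigma})^{-1/3}$ (with $O(\sqrt{\delta})$ only $(|w|p_n^{-\sigma})^{-1/4}$), never $(|w|p_n^{-\sigma})^{-1/2}$. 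Your specific tuning $\delta\asymp(|w|p_n^{-\sigma})^{-1}$ is also internally inconsistent: it makes the lower bound $|\Phi_w''|\geq\delta|w|p_n^{-\sigma}\asymp 1$ on the good set, so the second-derivative test there yields only $O(1)$. The missing observation, which is what the paper's argument uses, is that on the arcs where $|\cos(2\pi\theta-\tau)|\leq 1/\sqrt{2}$ one automatically has $|\sin(2\pi\theta-\tau)|\geq 1/\sqrt{2}$, hence $|g_{\tau,n}'(\theta)|\gg_{\varepsilon}p_n^{-\sigma}$ there, and the \emph{first}-derivative test gives $O(p_n^{\sigma}|w|^{-1})$ on those arcs; no tuning is needed. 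One splits $[0,1)$ into four arcs at the points where $|\sin|=|\cos|=1/\sqrt{2}$, applies the second-derivative test on the two arcs with $|\cos|\geq 1/\sqrt{2}$ (contributing $p_n^{\sigma/2}|w|^{-1/2}$) and the first-derivative test on the other two (contributing $p_n^{\sigma}|w|^{-1}$) --- this is exactly why the right-hand side of \eqref{5-9} has two terms. Your remark about ``residual pieces near the stationary points'' misplaces the first-derivative test: the stationary points of the phase (where $\sin\approx 0$) lie where $|\cos|\approx 1$, i.e.\ inside the region already handled by the second-derivative test, whereas the first-derivative test is needed precisely where the second derivative degenerates.
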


Since $\mathbb{P}_f(\varepsilon)$ is of positive density, obviously we can find more than
five (actually infinitely many) $n$'s for which \eqref{5-9} is valid.
Therefore using Lemma \ref{lem-MU2} we can deduce the conclusion of Theorem \ref{thm-MU2}.

\section{The value-distribution of symmetric power $L$-functions (the $t$-aspect)}
\label{sec6}

Now we proceed to state our new results in the present paper, on the value-distribution
of symmetric power $L$-functions, defined by \eqref{4-2}.
The proof of the results stated in this section will be given in Sections
\ref{sec7} and \ref{sec8}.

First consider the case $\gamma=2$, that is the symmetric square $L$-functions
\[
L(\mathrm{Sym}_f^2, s)=L_N(\mathrm{Sym}_f^2, s)\prod_{p\mid N}(1-\lambda_f(p^2)p^{-s})^{-1}.
\]
Assume $N$ is square-free and let $f\in S_k(N)$ be a primitive form.
Let
\[
\Lambda(\mathrm{Sym}_f^2,s)=N^s \pi^{-3s/2}\Gamma\left(\frac{s+1}{2}\right)
\Gamma\left(\frac{s+k-1}{2}\right)\Gamma\left(\frac{s+k}{2}\right)
L(\mathrm{Sym}_f^2,s).
\]
Then it is known (Shimura \cite{Shi75}, Gelbart and Jacquet \cite{GJ78}; see also
\cite{IM})
that $\Lambda(\mathrm{Sym}_f^2,s)$ can be continued to an entire function,
and satisfies the functional equation
\begin{align}\label{6-1}
\Lambda(\mathrm{Sym}_f^2,s)= \Lambda(\mathrm{Sym}_f^2,1-s).
\end{align}
Because of \eqref{6-1}, we can apply the general theorem of Kanemitsu, 
Sankaranarayanan and Tanigawa \cite{KST02}.    The part (iii) of their Theorem 1 implies
that, in the strip $2/3<\sigma<1$, it holds that
\begin{align}\label{6-2}
\int_1^T |L(\mathrm{Sym}_f^2,\sigma+it)|^2 dt=C_2(\sigma,f)T+
O\left(T^{2-(3/2)\sigma+\varepsilon}\right)
\end{align}
for any $\varepsilon>0$, where $C_2(\sigma,f)$ is a constant depending on $\sigma$ and $f$.
(Note that the first author \cite{Mat05} developed a more refined general theory, which
improves the error estimate in \eqref{6-2} to
$O(T^{3-3\sigma+\varepsilon})$; see \cite[(3.10)]{Mat05}.)
From \eqref{6-2} we find that
\begin{align}\label{6-3}
\int_1^T |L(\mathrm{Sym}_f^2,\sigma+it)|^2 dt=O(T)
\end{align}
for $\sigma>2/3$.    
This is condition (iii) of the class 
$\mathscr{M}_{00}$.    
Condition (ii) also follows from \eqref{6-1} by invoking the Phragm{\'e}n-Lindel{\"o}f
principle.
Therefore $L(\mathrm{Sym}_f^2,\cdot)\in\mathscr{M}_{00}$,
so the method in \cite{MU2} can be applied to
$L(\mathrm{Sym}_f^2,\cdot)$.    The result is

\begin{theorem}\label{thm-square}
Let $N$ be a square-free integer, and $f\in S_k(N)$ a primitive form.
For any $\sigma>2/3$, there exists a continuous non-negative function
${\mathcal M}_{\sigma}(z,L(\mathrm{Sym}_f^2,\cdot))$, explicitly defined on $\mathbb{C}$, for 
which
\begin{equation}\label{6-4}
\lim_{T\to\infty}\frac{1}{2T}\int_{-T}^T \Phi(\log L(\mathrm{Sym}_f^2,s+i\tau))d\tau=
\int_{\mathbb{C}} {\mathcal M}_{\sigma}(z,L(\mathrm{Sym}_f^2,\cdot))\Phi(z)|dz|
\end{equation}
holds for any test function $\Phi$ as in the statement of Theorem \ref{thm-IM1114}.
\end{theorem}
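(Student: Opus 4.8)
The plan is to deduce the theorem from Lemma~\ref{lem-MU2}, in exactly the way Theorem~\ref{thm-MU2} is deduced from Lemma~\ref{lem2-MU2}. The discussion preceding the statement shows that, for any $\sigma_0\in(2/3,1)$, $L(\mathrm{Sym}_f^2,\cdot)\in\mathscr{M}_{00}$ with that value of $\sigma_0$; hence Theorem~\ref{thm-M90} gives the existence of $W_\sigma(R;L(\mathrm{Sym}_f^2,\cdot))$ for every $\sigma>2/3$. By Lemma~\ref{lem-MU2} it therefore suffices to exhibit at least five indices $n$ for which the integral $K_n(w;L(\mathrm{Sym}_f^2,\cdot))$ of \eqref{5-2} satisfies $K_n(w;L(\mathrm{Sym}_f^2,\cdot))=O_n(|w|^{-1/2})$ as $|w|\to\infty$; once this is in hand, \eqref{5-4}--\eqref{5-6} produce the continuous, non-negative, explicitly given $M$-function, and the passage from characteristic functions of rectangles to the test functions $\Phi$ of Theorem~\ref{thm-IM1114}, together with the shift $\sigma\mapsto s+i\tau$ and the average over $\tau$, is carried out as in \cite[Remark 9.1]{IM11a} (cf.\ Remark~\ref{rem-MU2}), giving \eqref{6-4}.

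To analyse the curves we would first write them out. For a prime $p_n\nmid N$ one has, since $\alpha_f(p_n)\beta_f(p_n)=1$,
\[
A_n(X)=\bigl(1-\alpha_f^2(p_n)X\bigr)(1-X)\bigl(1-\beta_f^2(p_n)X\bigr),
\]
so that $z_n(\theta;L(\mathrm{Sym}_f^2,\cdot))$ is the geometric sum of the three planar convex curves coming from $-\log(1-a\,p_n^{-\sigma}e^{2\pi i\theta})$ with $a\in\{\alpha_f^2(p_n),1,\beta_f^2(p_n)\}$. Expanding the logarithms, the leading term of this sum is
\[
\bigl(1+\alpha_f^2(p_n)+\beta_f^2(p_n)\bigr)p_n^{-\sigma}e^{2\pi i\theta}=\bigl(\lambda_f(p_n)^2-1\bigr)p_n^{-\sigma}e^{2\pi i\theta},
\]
using $\alpha_f(p_n)+\beta_f(p_n)=\lambda_f(p_n)$, with a remainder of size $O(p_n^{-2\sigma})$. (For the finitely many $p_n\mid N$ the curve reduces to a single convex curve and plays no role below.) Hence, whenever $|\lambda_f(p_n)^2-1|$ is bounded away from $0$, the curve $z_n(\theta;L(\mathrm{Sym}_f^2,\cdot))$ is, for $p_n$ large, a small perturbation of a genuine circle of radius $\asymp p_n^{-\sigma}$, so it is a regular closed curve of non-vanishing curvature; the geometric argument of \cite{MU2} underlying Lemma~\ref{lem2-MU2} (a van der Corput / stationary-phase estimate for an oscillatory integral over a curve of non-vanishing curvature) then yields
\[
K_n(w;L(\mathrm{Sym}_f^2,\cdot))=O\bigl(p_n^{\sigma/2}|w|^{-1/2}+p_n^{\sigma}|w|^{-1}\bigr)\qquad(|w|\to\infty),
\]
which is $O_n(|w|^{-1/2})$, the desired \eqref{5-3}.

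It remains to produce enough admissible $n$. For a small fixed $\varepsilon>0$ and every prime $p_n\in\mathbb{P}_f(\varepsilon)=\{p:|\lambda_f(p)|>\sqrt2-\varepsilon\}$ one has $|\lambda_f(p_n)^2-1|>(\sqrt2-\varepsilon)^2-1>0$, and $\mathbb{P}_f(\varepsilon)$ is of positive density by M. R. Murty \cite{RM83} and M. R. Murty and V. K. Murty \cite{RMKM}. Fixing such an $\varepsilon$, the previous paragraph applies to all sufficiently large $p_n\in\mathbb{P}_f(\varepsilon)$, so there are infinitely many---in particular at least five---admissible $n$, and Lemma~\ref{lem-MU2} completes the proof.

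The step we expect to be the main obstacle is the geometric one, namely the symmetric-square analogue of Lemma~\ref{lem2-MU2}: one must check carefully that on $\mathbb{P}_f(\varepsilon)$ the $O(p_n^{-2\sigma})$ corrections to the leading circle do not destroy the non-vanishing of the curvature that the circle provides, uniformly in $\theta$, so that the stationary-phase bound is legitimate with the stated dependence on $p_n$. Controlling the geometric sum of three convex curves (whose combined curve is not convex) is precisely what makes this more delicate than the classical Jessen--Wintner situation, though the machinery of \cite{MU2} is designed for exactly this difficulty; everything else in the argument is either already established in the excerpt or a routine transcription of \cite{MU2}.
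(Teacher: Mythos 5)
Your proposal is correct and follows essentially the same route as the paper: the paper's Section~\ref{sec7} supplies exactly the general form of the ``perturbation of a circle'' / stationary-phase step you flag as the main obstacle (condition \eqref{7-5} on the leading Taylor coefficient $r_{1,n}$ together with Lemmas~\ref{lem-7-1} and~\ref{lem-JW}), and the paper's proof of Theorem~\ref{thm-square} then consists precisely of your remaining computation $r_{1,n}=\lambda_f(p_n)^2-1$ combined with the positive density of $\mathbb{P}_f(\varepsilon)$ and Lemma~\ref{lem-MU2}.
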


Next consider more general symmetric power $L$-functions 
$L_N(\mathrm{Sym}_f^{\gamma}, s)$ (see \eqref{4-2})
associated with a primitive form $f\in S_k(N)$, where $N$ is a positive integer.
It is known that $L_N(\mathrm{Sym}_f^{\gamma}, s)$
has meromorphic continuation to the whole complex plane (see \cite{BGHT}).
We assume the following 
\begin{assumption}\label{automorphy}
There are predicted local factors
$L_p(\mathrm{Sym}_f^{\gamma}, s)$ for $p\mid N$
and
$L_N(\mathrm{Sym}_f^{\gamma}, s)$ satisfies the functional equation
%
%
\begin{align}\label{6-5}
\Lambda(\mathrm{Sym}_f^{\gamma},s)=\varepsilon_{\gamma,f}
\Lambda(\mathrm{Sym}_f^{\gamma},1-s),
\end{align}
where $|\varepsilon_{\gamma,f}|=1$ and
\[
\Lambda(\mathrm{Sym}_f^{\gamma},s)
=q_{\gamma,f}^{s/2}\widetilde{\Gamma}_{\gamma}(s)
L_N(\mathrm{Sym}_f^{\gamma},s)\prod_{p\mid N}L_p(\mathrm{Sym}_f^{\gamma},s)
\]
with the conductor $q_{\gamma,f}$ and the ``gamma factor'' $\widetilde{\Gamma}_{\gamma}(s)$.
Here, the gamma factor is written by
\begin{align}\label{gamma-factor}
\widetilde{\Gamma}_{\gamma}(s)
=\pi^{-(\gamma+1)s/2}\prod_{j=1}^{\gamma+1}\Gamma\bigg(\frac{s+\kappa_{j,\gamma}}{2}\bigg),
\end{align}
where $\kappa_{j,\gamma}\in\mathbb{R}$, and each local factor for $p|N$ is written as
\begin{align}\label{local-factor}
L_p(\mathrm{Sym}_f^{\gamma},s)=(1-\lambda_{p,\gamma,f}p^{-s})^{-1},
\quad |\lambda_{p,\gamma,f}|\leq p^{-\gamma/2}
\end{align}
(see Cogdell and Michel \cite{CM04}, Moreno and Shahidi \cite{MS85}, Rouse \cite{rouse07},
and Rouse and Thorner \cite{RT17}).
\end{assumption}
The above assumptions are reasonable in view of the Langlands functoriality conjecture.
Let 
\[
L(\mathrm{Sym}_f^{\gamma}, s)=
L_N(\mathrm{Sym}_f^{\gamma},s)\prod_{p\mid N}L_p(\mathrm{Sym}_f^{\gamma},s).
\]
From \eqref{4-2} and \eqref{local-factor} we see that the Dirichlet series expansion
of $L(\mathrm{Sym}_f^{\gamma}, s)$ is of the form
$\sum_{n=1}^{\infty}c_n n^{-s}$, $|c_n|\ll n^{\varepsilon}$.
Since the gamma factor is given by \eqref{gamma-factor}, 
again using the general result of \cite{KST02}, we obtain
\begin{align}\label{6-7}
\int_1^T |L(\mathrm{Sym}_f^{\gamma},\sigma+it)|^2 dt=C_{\gamma}(\sigma,f)T+
O\left(T^{1+(\gamma/2)-((\gamma+1)/2)\sigma+\varepsilon}\right)
\end{align}
in the strip 
$1-1/(\gamma+1) <\sigma<1$,
with a certain constant 
$C_{\gamma}(\sigma,f)$.
Therefore $L(\mathrm{Sym}_f^{\gamma},\cdot)\in\mathscr{M}_{00}$.

Another tool we use is
the following quantitative version of the Sato-Tate conjecture
due to Thorner \cite{Tho14}.
We write $\alpha_f(p)=e^{i\theta_f(p)}$; we may assume $0\leq\theta_f(p)\leq\pi$.
Let $I$ be any subset of $[0,\pi]$, and let 
\[
\pi_I(x)=\#\{p:\text{prime}\;|\;p\leq x, \;\theta_f(p)\in I\}.
\]
Then Thorner's result is, under Assumption~\ref{automorphy},
\begin{align}\label{Thorner}
\frac{\pi_I(x)}{\pi(x)}=\frac{2}{\pi}\int_a^b \sin^2\theta d\theta
+O\left(\frac{x}{\pi(x)(\log x)^{9/8-\varepsilon}}\right)
\end{align}
for any $\varepsilon>0$, where $I=[a,b]$ and $\pi(x)$ denotes the number of primes up to $x$.
(Under the assumption of the GRH for $L(\mathrm{Sym}_f^{\gamma},s)$,
sharper estimates for
the error term are known.) 

\begin{theorem}\label{thm-general}
Let $N$ be a positive integer.
Let $f\in S_k(N)$ be a primitive form which is not of CM-type.
Let $\gamma\geq 2$, and assume 
Assumption~\ref{automorphy}.
Then, 
for any $\sigma>1-1/(\gamma+1)$,
there exists a continuous non-negative function
${\mathcal M}_{\sigma}(z,L(\mathrm{Sym}_f^{\gamma},\cdot))$, explicitly defined on $\mathbb{C}$, for 
which
\begin{align}\label{6-9}
&\lim_{T\to\infty}\frac{1}{2T}\int_{-T}^T \Phi(\log L(\mathrm{Sym}_f^{\gamma},s+i\tau))d\tau\\
=&
\int_{\mathbb{C}} {\mathcal M}_{\sigma}(z,L(\mathrm{Sym}_f^{\gamma},\cdot))\Phi(z)|dz|\notag
\end{align}
holds for any test function $\Phi$ as in the statement of Theorem \ref{thm-IM1114}.
\end{theorem}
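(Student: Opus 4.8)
The plan is to reduce Theorem~\ref{thm-general} to the framework already set up in Section~\ref{sec5}, exactly as Theorem~\ref{thm-square} was reduced. We have already established that $L(\mathrm{Sym}_f^{\gamma},\cdot)\in\mathscr{M}_{00}$: condition (iii) of the class follows from the mean-value estimate \eqref{6-7} (valid for $\sigma>1-1/(\gamma+1)$), condition (ii) follows from the functional equation \eqref{6-5} together with the Phragm\'en--Lindel\"of principle applied to the vertical strip, and condition (i) follows from the meromorphic continuation in \cite{BGHT} together with Assumption~\ref{automorphy}. Hence by Theorem~\ref{thm-M90} the limit $W_{\sigma}(R;L(\mathrm{Sym}_f^{\gamma},\cdot))$ exists for every $\sigma>1-1/(\gamma+1)$, and by Lemma~\ref{lem-MU2} it suffices to exhibit at least five indices $n$ for which the oscillatory integral $K_n(w;L(\mathrm{Sym}_f^{\gamma},\cdot))$ decays like $O_n(|w|^{-1/2})$ as $|w|\to\infty$; the density function $\mathcal{M}_{\sigma}$ is then given explicitly by \eqref{5-5}--\eqref{5-6}, and \eqref{6-9} follows from \eqref{5-4} by the argument of \cite[Remark~9.1]{IM11a} recorded in Remark~\ref{rem-MU2}.

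The heart of the matter is therefore the decay estimate \eqref{5-3} for the symmetric power curves. For $p\nmid N$ the local factor of $L(\mathrm{Sym}_f^{\gamma},s)$ is $\prod_{h=0}^{\gamma}(1-\alpha_f^{\gamma-h}(p)\beta_f^h(p)p^{-s})^{-1}$, so with $\alpha_f(p)=e^{i\theta_f(p)}$ and $\beta_f(p)=e^{-i\theta_f(p)}$ the associated curve at the prime $p=p_n$ is
\begin{equation}\label{plan-curve}
z_n(\theta;L(\mathrm{Sym}_f^{\gamma},\cdot))
=-\sum_{h=0}^{\gamma}\log\bigl(1-p_n^{-\sigma}e^{i(\gamma-2h)\theta_f(p_n)}e^{2\pi i\theta}\bigr)
\qquad(0\le\theta<1).
\end{equation}
Following the strategy of Lemma~\ref{lem2-MU2}, I would seek a positive-density set of primes $p$ for which this curve has, along most of its length, non-vanishing curvature, so that the classical van der Corput second-derivative bound gives $K_n(w;L(\mathrm{Sym}_f^{\gamma},\cdot))=O_\varepsilon(p_n^{\sigma/2}|w|^{-1/2}+p_n^{\sigma}|w|^{-1})$, which in particular is $O_n(|w|^{-1/2})$ for each such fixed $n$. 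The point where Thorner's quantitative Sato--Tate estimate \eqref{Thorner} enters is precisely here: it guarantees that the angles $\theta_f(p)$ are equidistributed with respect to the Sato--Tate measure with a power-saving error, so one can restrict to primes with $\theta_f(p)$ lying in a fixed subinterval $I\subset(0,\pi)$ bounded away from the endpoints $0,\pi$ (and away from any exceptional finite set of angles where the curve \eqref{plan-curve} might be degenerate), and still retain positive density. The non-CM hypothesis on $f$ is what makes $\theta_f(p)$ genuinely vary, i.e. it prevents the $\alpha_f(p)$ from being confined to roots of unity and so justifies applying Sato--Tate.

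The main obstacle will be the curvature analysis of \eqref{plan-curve}: unlike the $\gamma=1$ case of Lemma~\ref{lem2-MU2}, which reduces to a single cardioid-type curve, here $z_n$ is a superposition of $\gamma+1$ epicycloids with different angular speeds $(\gamma-2h)\theta_f(p_n)$, and one must show that for $\theta_f(p)$ in a suitable fixed range the second derivative $z_n''(\theta)$ (equivalently, the imaginary part of $\overline{z_n'(\theta)}\,z_n''(\theta)$, which controls the signed curvature) does not vanish except on a set of $\theta$ of measure $O(p_n^{-\delta})$, so that van der Corput applies on the complementary arcs with the error absorbed into the $p_n^{\sigma}|w|^{-1}$ term. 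Expanding $z_n(\theta)$ as a power series in $p_n^{-\sigma}$, the leading term is $p_n^{-\sigma}\sum_{h}e^{i(\gamma-2h)\theta_f(p_n)}e^{2\pi i\theta}$, a single circle of radius $p_n^{-\sigma}\bigl|\sum_h e^{i(\gamma-2h)\theta_f(p)}\bigr|=p_n^{-\sigma}\bigl|U_\gamma(\cos\theta_f(p))\bigr|$ where $U_\gamma$ is the Chebyshev polynomial of the second kind; one chooses $I$ so that $U_\gamma(\cos\theta)$ is bounded away from zero on $I$, and then checks that the higher-order corrections (of size $O(p_n^{-2\sigma})$) perturb the curvature by a controlled amount. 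Once this geometric lemma is in place, the count of eligible $n$ via \eqref{Thorner} is immediate, Lemma~\ref{lem-MU2} delivers \eqref{5-4}, and the passage to the integrated form \eqref{6-9} is the standard approximation argument of \cite[Remark~9.1]{IM11a}.
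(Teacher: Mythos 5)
Your proposal follows essentially the same route as the paper: reduce to the class $\mathscr{M}_{00}$ via \eqref{6-7}, invoke Lemma~\ref{lem-MU2}, and verify the decay \eqref{5-3} by observing that the leading Taylor coefficient of the local curve is $r_{1,n}=\sum_{h}e^{i(\gamma-2h)\theta_f(p_n)}=U_\gamma(\cos\theta_f(p_n))$ and using Thorner's quantitative Sato--Tate theorem to produce a positive density of primes where this is bounded away from zero, after which the first/second derivative tests yield the Jessen--Wintner bound $O(p_n^{\sigma/2}|w|^{-1/2}+p_n^{\sigma}|w|^{-1})$. The paper packages the derivative-test step once and for all as Lemma~\ref{lem-JW} under the single hypothesis $|r_{1,n}|\geq C$, and obtains the lower bound on $|r_{1,n}|$ from the identity $U_\gamma(\cos\theta)\sin\theta=\sin((\gamma+1)\theta)$ together with an explicit choice of intervals, but the substance of your argument is identical.
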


\begin{remark}
In Theorem \ref{thm-general} we assume 
Assumtion~\ref{automorphy}, because it is not yet fully proved.
However, Barnet-Lamb et al. \cite{BGHT} proved the ``potential automorphy'' of
$L(\mathrm{Sym}_f^{\gamma},s)$, which gives a certain functional equation
(see \cite[Theorem B, Assertion 2]{BGHT}).    If the factors appearing in the
functional equation are shown to be sufficiently well-behaved, we can apply 
the result in
\cite{KST02} to obtain some suitable mean value result unconditionally, so we can
remove Assumtion~\ref{automorphy} from the statement of Theorem \ref{thm-general}.
\end{remark}

\section{Some general lemmas}\label{sec7}

We start the proof of theorems stated in the preceding section.
In this section we consider the general situation that
$\varphi\in\mathscr{M}_{00}$ with
$f(k,n)=1$ for all $k$ and $n$.    Then
\begin{equation}\label{7-1}
z_n(\theta_n;\varphi)=-\sum_{k=1}^{g(n)}
\log(1-a_n^{(k)}p_n^{-\sigma}e^{2\pi i\theta_n}).
\end{equation}
Put
\[
R_n(X;\varphi)=-\sum_{k=1}^{g(n)}\log(1-a_n^{(k)}X),
\]
and write its Taylor expansion as $R_n(X;\varphi)=\sum_{j=1}^{\infty}r_{j,n} X^j$.
Then we have 
\begin{equation}\label{7-2}
z_n(\theta_n;\varphi)=R_n(p_n^{-\sigma}e^{2\pi i\theta_n};\varphi)
=\sum_{j=1}^{\infty}r_{j,n} p_n^{-j\sigma}e^{2\pi ij\theta_n}.
\end{equation}

Let $x_n(\theta_n;\varphi)=\Re z_n(\theta_n;\varphi)$ and
$y_n(\theta_n;\varphi)=\Im z_n(\theta_n;\varphi)$.
Write $w=|w|e^{i\tau}=|w|\cos\tau+i|w|\sin\tau$.    Then
\begin{equation}\label{7-3}
\langle z_n(\theta_n;\varphi),w\rangle=|w|g_{\tau,n}(\theta_n;\varphi),
\end{equation}
where
\[
g_{\tau,n}(\theta_n;\varphi)=x_n(\theta_n;\varphi)\cos\tau
+y_n(\theta_n;\varphi)\sin\tau.
\]
Substituting this into \eqref{5-2}, we have
\begin{equation}\label{7-4}
K_n(w,\varphi)=\int_0^1 \exp\left(i|w|g_{\tau,n}(\theta_n;\varphi)\right)d\theta_n.
\end{equation}
Therefore, to evaluate $K_n(w,\varphi)$, the essential point is to analyze the
behavior of $g_{\tau,n}(\theta_n;\varphi)$.     We prove

\begin{lemma}\label{lem-7-1}
Let $\varphi\in\mathscr{M}_{00}$.
The function $g_{\tau,n}(\theta_n;\varphi)$ is a $C^{\infty}$-class function as a 
function in $\theta_n$.    Moreover, if $n$ is sufficiently large, and
\begin{equation}\label{7-5}
|r_{1,n}|\geq C
\end{equation}
holds with a positive constant $C$, then for those $n$,
$g_{\tau,n}^{\prime\prime}(\theta_n;\varphi)$ has exactly two zeros on the interval
$[0,1)$.    The same assertion also holds for 
$g_{\tau,n}^{\prime}(\theta_n;\varphi)$.
\end{lemma}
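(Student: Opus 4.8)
The plan is to reduce everything to the Taylor expansion \eqref{7-2}. Since $\varphi\in\mathscr{M}_{00}$, we have $\alpha=\beta=0$, so $|a_n^{(k)}|\le 1$ and $g(n)\le C_0 p_n$; hence $R_n(X;\varphi)$ is holomorphic on $|X|\le p_n^{-\sigma}$ for $n$ large (as $\sigma>1/2$, eventually $p_n^{-\sigma}<1$), and the series $z_n(\theta_n;\varphi)=\sum_{j\ge 1} r_{j,n} p_n^{-j\sigma} e^{2\pi i j\theta_n}$ converges absolutely and uniformly in $\theta_n$, together with all its $\theta_n$-derivatives, since differentiation brings down factors of $(2\pi i j)$ which are absorbed by the geometric decay $p_n^{-j\sigma}$ once $p_n^{-\sigma}$ is small. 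This immediately gives that $x_n,y_n$, and therefore $g_{\tau,n}(\cdot;\varphi)=x_n\cos\tau+y_n\sin\tau$, are $C^\infty$ on $[0,1)$ (indeed real-analytic and $1$-periodic), proving the first assertion. I would also record the coefficient bound $|r_{j,n}|\le C_1 j^{-1} g(n)\le C_1' p_n$ coming from $r_{j,n}=j^{-1}\sum_k (a_n^{(k)})^j$, which controls the tails uniformly in $n$.

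Next I would isolate the dominant term. Writing $w=|w|e^{i\tau}$, the real part of $r_{j,n} p_n^{-j\sigma} e^{2\pi i j\theta_n} e^{-i\tau}$ is what enters $g_{\tau,n}$, so
\[
g_{\tau,n}(\theta_n;\varphi)=|r_{1,n}| p_n^{-\sigma}\cos\bigl(2\pi\theta_n+\phi_{1,n}-\tau\bigr)+E_n(\theta_n),
\]
where $\phi_{1,n}=\arg r_{1,n}$ and $E_n(\theta_n)=\sum_{j\ge 2}\Re\bigl(r_{j,n}p_n^{-j\sigma}e^{2\pi ij\theta_n-i\tau}\bigr)$. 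Differentiating termwise twice, the leading term contributes $-(2\pi)^2|r_{1,n}|p_n^{-\sigma}\cos(2\pi\theta_n+\phi_{1,n}-\tau)$, a pure cosine with exactly two zeros on $[0,1)$ and derivative bounded below by a constant multiple of $|r_{1,n}|p_n^{-\sigma}$ near those zeros, while the error satisfies
\[
\|E_n''\|_\infty\le (2\pi)^2\sum_{j\ge 2} j^2 |r_{j,n}| p_n^{-j\sigma}\ll p_n\cdot p_n^{-2\sigma}=p_n^{1-2\sigma},
\]
and similarly $\|E_n'\|_\infty\ll p_n^{1-2\sigma}$. Under the hypothesis $|r_{1,n}|\ge C$, the main term of $g_{\tau,n}''$ has size $\asymp C p_n^{-\sigma}$ while the error is $O(p_n^{1-2\sigma})=o(p_n^{-\sigma})$ as $n\to\infty$ (since $\sigma>1/2$ forces $1-2\sigma<0<1-\sigma$, wait — rather $p_n^{1-2\sigma}/p_n^{-\sigma}=p_n^{1-\sigma}$, which does \emph{not} tend to $0$). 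This is the point I flag below.

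The main obstacle is precisely this: the naive tail bound $|r_{j,n}|\ll p_n$ is too weak, because $g_{\tau,n}$ itself is of size $\asymp p_n^{-\sigma}$ only in its leading term but can a priori be as large as $p_n^{1-\sigma}$ overall, so "the leading term dominates" is false without extra input. The resolution I would pursue is to observe that $z_n$ is a sum of $g(n)$ terms of the form $-\log(1-a_n^{(k)}p_n^{-\sigma}e^{2\pi i\theta})$, each of which is a convex curve of diameter $O(p_n^{-\sigma})$ with second derivative in $\theta$ of size $O(g(n)^{-1})$ relative to... — more cleanly, one splits according to whether $\sum_k|a_n^{(k)}|^2$ (which governs the $\theta$-variation) is comparable to $g(n)$ or not, and uses that the condition $|r_{1,n}|\ge C$, i.e. $|\sum_k a_n^{(k)}|\ge C$, cannot by itself be the relevant smallness/largeness parameter; the correct argument compares $|r_{1,n}|$ with $\bigl(\sum_k |a_n^{(k)}|^2\bigr)^{1/2}$ and treats the curve $z_n$ directly as a perturbation of the single dominant harmonic. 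I expect the paper to carry this out by a direct geometric/analytic estimate of $z_n'(\theta_n)$ and $z_n''(\theta_n)$ showing $z_n'(\theta_n)=-2\pi i\, r_{1,n}p_n^{-\sigma}e^{2\pi i\theta_n}+O(p_n^{-2\sigma}\sum_k|a_n^{(k)}|^2)$ with the error genuinely lower order \emph{because the sum $\sum_k |a_n^{(k)}|^2$ is itself controlled}, e.g. by the normalization forcing $|a_n^{(k)}|\le 1$ and the relevant $L$-function having bounded-on-average coefficients; once the error is $o(|r_{1,n}|p_n^{-\sigma})$ uniformly near the zeros of the main term, Rolle's theorem plus sign changes of the dominant cosine give exactly two zeros of $g_{\tau,n}''$ and of $g_{\tau,n}'$ on $[0,1)$, completing the proof.
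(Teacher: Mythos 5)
Your overall strategy---isolate the first harmonic $r_{1,n}p_n^{-\sigma}e^{2\pi i\theta_n}$, bound the tail $j\ge 2$ termwise, and run a perturbation argument---is exactly the paper's (which defers to \cite[Lemma 7.1]{MU2}). But the ``main obstacle'' you flag is self-inflicted: you misquote the defining condition of $\mathscr{M}_{00}$. The class $\mathscr{M}_{\alpha\beta}$ requires $g(n)\le C_0p_n^{\alpha}$, so for $\mathscr{M}_{00}$ (i.e.\ $\alpha=\beta=0$) one has $g(n)\le C_0$, a \emph{bounded} number of local roots, not $g(n)\le C_0p_n$. Consequently $|r_{j,n}|=\bigl|j^{-1}\sum_k(a_n^{(k)})^j\bigr|\le g(n)/j\le C_0/j$ uniformly in $n$, and your tail estimate becomes
\begin{equation*}
\|E_n''\|_\infty\le(2\pi)^2\sum_{j\ge 2}j^2|r_{j,n}|p_n^{-j\sigma}\ll\sum_{j\ge 2}jp_n^{-j\sigma}\ll p_n^{-2\sigma},
\end{equation*}
which is $o(p_n^{-\sigma})$ since $\sigma>\sigma_0\ge 1/2>0$. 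The leading cosine, of amplitude $\asymp|r_{1,n}|p_n^{-\sigma}\ge Cp_n^{-\sigma}$, therefore genuinely dominates $g_{\tau,n}'$ and $g_{\tau,n}''$ for $n$ large, and the ``exactly two zeros'' conclusion follows by your sign-change/Rolle argument (this is precisely the expansion displayed in the paper's proof of Lemma \ref{lem-JW}). This single observation $|r_{j,n}|\le C_0/j$ is the entire content of the paper's proof of Lemma \ref{lem-7-1}.

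The second half of your writeup---the speculation about comparing $|r_{1,n}|$ with $\bigl(\sum_k|a_n^{(k)}|^2\bigr)^{1/2}$, splitting cases, and appealing to bounded-on-average coefficients---is not needed and does not amount to a proof as written; it should be deleted once the bound on $g(n)$ is corrected. One further small point: you should say explicitly why ``exactly two'' zeros (rather than ``at most'' or ``at least'' two) follows; the clean way is that $g_{\tau,n}''$ is a small $C^1$-perturbation of $-(2\pi)^2|r_{1,n}|p_n^{-\sigma}\cos(\gamma_{1,n}+2\pi\theta_n-\tau)$, whose derivative is bounded away from zero on neighbourhoods of its two zeros, so each zero of the cosine yields exactly one zero of the perturbed function and no zeros arise elsewhere.
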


\begin{proof}
This lemma is an analogue of \cite[Lemma 7.1]{MU2}.
From the definition, we have
\begin{equation}\label{7-6}
r_{j,n}=\frac{1}{j}\sum_{k=1}^{g(n)}(a_{n}^{(k)})^j.
\end{equation}
Since $\varphi\in\mathscr{M}_{00}$, we find that 
$|r_{j,n}|\leq g(n)/j \leq C_0/j$.
Noting this point, we can see that exactly the same argument as in the proof of
\cite[Lemma 7.1]{MU2} can be applied to our present situation.
(The part on $g_{\tau,n}^{\prime}(\theta_n;\varphi)$ is the same as in
\cite[Remark 7.2]{MU2}.)
\end{proof}

Now we can show the following lemma, which is the analogue of Lemma \ref{lem2-MU2}
for $\varphi\in\mathscr{M}_{00}$.

\begin{lemma}[The Jessen-Wintner inequality for $\varphi$]\label{lem-JW}
Let $\varphi\in\mathscr{M}_{00}$,
and assume that $n$ is sufficiently large and
\eqref{7-5} holds.    Then we have
\begin{equation}\label{7-7}
K_n(w,\varphi)=O\left(\frac{p_n^{\sigma/2}}{|w|^{1/2}}+
\frac{p_n^{\sigma}}{|w|}\right).
\end{equation}
\end{lemma}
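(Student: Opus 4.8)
The plan is to bound the oscillatory integral $K_n(w,\varphi)=\int_0^1 \exp(i|w|g_{\tau,n}(\theta_n;\varphi))\,d\theta_n$ of \eqref{7-4} by the classical van der Corput / stationary phase method, exploiting the structure of the phase $g_{\tau,n}$ supplied by Lemma \ref{lem-7-1}. The key qualitative input is that, for $n$ large with $|r_{1,n}|\geq C$, both $g_{\tau,n}^{\prime}$ and $g_{\tau,n}^{\prime\prime}$ have exactly two zeros on $[0,1)$; moreover the phase is genuinely of size comparable to its leading Fourier term, so that the second derivative is bounded below away from its (finitely many) zeros. Concretely, from \eqref{7-2} one has $g_{\tau,n}(\theta_n;\varphi)=\Re\big(e^{-i\tau}\sum_{j\geq1} r_{j,n}p_n^{-j\sigma}e^{2\pi ij\theta_n}\big)$, so the dominant contribution is $\Re(e^{-i\tau}r_{1,n}p_n^{-\sigma}e^{2\pi i\theta_n})$, of amplitude $\asymp p_n^{-\sigma}$, while the tail $\sum_{j\geq2}$ is $O(p_n^{-2\sigma})$ together with all its derivatives (using $|r_{j,n}|\leq C_0/j$ from \eqref{7-6}). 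Hence $g_{\tau,n}^{\prime\prime}$ is, up to a lower-order perturbation, $-(2\pi)^2 p_n^{-\sigma}\Re(e^{-i\tau}r_{1,n}e^{2\pi i\theta_n})$, which has modulus $\gg p_n^{-\sigma}$ except on a neighbourhood of its two zeros.

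The argument then proceeds as follows. First I would split $[0,1)$ into $O(1)$ subintervals on each of which $g_{\tau,n}^{\prime\prime}$ is either monotone or of one sign, using the two-zeros statement of Lemma \ref{lem-7-1}; on the intervals bounded away from the zeros of $g_{\tau,n}^{\prime\prime}$ we have $|g_{\tau,n}^{\prime\prime}|\gg p_n^{-\sigma}$, and the classical second-derivative test (van der Corput's lemma) gives a contribution $O\big((|w|p_n^{-\sigma})^{-1/2}\big)=O\big(p_n^{\sigma/2}|w|^{-1/2}\big)$. Near each of the (at most two) zeros of $g_{\tau,n}^{\prime\prime}$, I would instead shrink to an interval of length $\delta$; there the first derivative $g_{\tau,n}^{\prime}$ is monotone (its own two zeros having been separated off, or by a further subdivision), so the first-derivative test applies provided $|g_{\tau,n}^{\prime}|$ is bounded below there — and if it is not, one simply estimates the short interval trivially by $\delta$. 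Choosing $\delta$ to balance the two estimates produces the combined bound $O\big(p_n^{\sigma/2}|w|^{-1/2}+p_n^{\sigma}|w|^{-1}\big)$, which is exactly \eqref{7-7}. This is the standard two-term shape one gets from van der Corput when the second derivative is allowed to vanish at isolated points, and it is the same shape as \eqref{5-9} in Lemma \ref{lem2-MU2}, as it must be since this lemma is its $\mathscr{M}_{00}$-analogue.

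The only real subtlety — and the step I expect to be the main obstacle — is bookkeeping the uniformity in the two parameters $p_n$ and $\tau$ while handling the neighbourhoods of the critical points. One must check that the lower bound $|g_{\tau,n}^{\prime\prime}|\gg p_n^{-\sigma}$ away from its zeros holds uniformly in $\tau\in[0,2\pi)$, which follows because the leading term's amplitude $|r_{1,n}|p_n^{-\sigma}$ is $\gg p_n^{-\sigma}$ by \eqref{7-5} while the higher-order terms contribute $O(p_n^{-2\sigma})$ uniformly in $\tau$; and one must verify that the separation between the zeros of $g_{\tau,n}^{\prime}$ and those of $g_{\tau,n}^{\prime\prime}$, as well as the lengths of the subintervals, can be taken uniform in $\tau$ for all large $n$. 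All of this is essentially the content of the proof of \cite[Lemma 7.1]{MU2} together with the classical estimates; since Lemma \ref{lem-7-1} has already reduced the geometric input to exactly the form used in \cite{MU2}, I would simply invoke the van der Corput estimates of \cite{MU2} (the proof of Lemma \ref{lem2-MU2}) verbatim, remarking only that the quantitative hypothesis $|r_{1,n}|\geq C$ replaces the positive-density condition $p_n\in\mathbb{P}_f(\varepsilon)$ used there. Everything else — the splitting into $O(1)$ pieces, the choice of $\delta$, and the final optimisation — is routine.
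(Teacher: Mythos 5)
Your proposal is correct and follows essentially the same route as the paper: isolate the leading term $|r_{1,n}|p_n^{-\sigma}$ (large by \eqref{7-5}, dominating the $O(p_n^{-2\sigma})$ tail), split $[0,1)$ into $O(1)$ arcs on which either $|g_{\tau,n}^{\prime}|\gg p_n^{-\sigma}$ or $|g_{\tau,n}^{\prime\prime}|\gg p_n^{-\sigma}$, and apply the first- and second-derivative tests, exactly as in \cite[Proposition 7.3]{MU2}. The only cosmetic difference is that the paper cuts at the four points where $|\sin(\gamma_{1,n}+2\pi\theta_n-\tau)|=|\cos(\gamma_{1,n}+2\pi\theta_n-\tau)|=1/\sqrt{2}$, so that near each zero of $g_{\tau,n}^{\prime\prime}$ the first derivative is automatically bounded below and no $\delta$-optimisation or trivial estimate is needed.
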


\begin{proof}
The method of the proof is the same as in \cite[Proposition 7.3]{MU2} (whose idea
goes back to Jessen and Wintner \cite{JW35}), so we just sketch the idea briefly.

Using \eqref{7-2} we have
$$
g_{\tau,n}(\theta_n;\varphi)=\sum_{j=1}^{\infty}|r_{j,n}|p_n^{-j\sigma}
\cos(\gamma_{j,n}+2\pi j\theta_n-\tau),
$$
where $\gamma_{j,n}=\arg r_{j,n}$, and hence
\begin{align*}
&g_{\tau,n}^{\prime}(\theta_n;\varphi)=-2\pi |r_{1,n}|p_n^{-\sigma}
\sin(\gamma_{1,n}+2\pi\theta_n-\tau)+O(p_n^{-2\sigma}),\\
&g_{\tau,n}^{\prime\prime}(\theta_n;\varphi)=-(2\pi)^2 |r_{1,n}|p_n^{-\sigma}
\cos(\gamma_{1,n}+2\pi\theta_n-\tau)+O(p_n^{-2\sigma}).
\end{align*}

Let $\theta_n=\theta_1^c, \theta_2^c$ be two solutions of 
$\cos(\gamma_{1,n}+2\pi\theta_n-\tau)=0$
($0\leq\theta_n<1$).
Then, when $n$ is sufficiently large and \eqref{7-5} holds, the two solutions of
$g_{\tau,n}^{\prime\prime}(\theta_n;\varphi)=0$ stated in Lemma \ref{lem-7-1} are close to
$\theta_1^c, \theta_2^c$.
Similarly, the two solutions of $g_{\tau,n}^{\prime}(\theta_n;\varphi)=0$ are close to
the two solutions $\theta_n=\theta_1^s, \theta_2^s$ be two solutions of 
$\sin(\gamma_{1,n}+2\pi\theta_n-\tau)=0$.
Then, for each $i,j$ ($1\leq i,j \leq 2$), there exists a unique $\theta_{ij}$ between
$\theta_i^c$ and $\theta_j^s$ for which
\[
|\sin(\gamma_{1,n}+2\pi\theta_n-\tau)|=|\cos(\gamma_{1,n}+2\pi\theta_n-\tau)|=1/\sqrt{2}
\]
holds.     

We divide the interval $0\leq\theta_n<1$ (mod 1) into four subintervals at the values
$\theta_{ij}$, and divide also the integral \eqref{7-4} accordingly.
   
On two of those subintervals 
$|\sin(\gamma_{1,n}+2\pi\theta_n-\tau)|\geq 1/\sqrt{2}$, which implies that
$|g_{\tau,n}^{\prime}(\theta_n;\varphi)|$ is not close to 0.
Therefore the integrals on those subintervals can be evaluated by the first
derivative test.    On the other two subintervals 
$|g_{\tau,n}^{\prime\prime}(\theta_n;\varphi)|$ is not close to 0, so the second
derivative test works.
These evaluations give the conclusion \eqref{7-7}.
\end{proof}

If there exist at least five large values of $n$ for which \eqref{7-5} holds, then
we can apply Lemma \ref{lem-JW} to Lemma \ref{lem-MU2} to obtain 
\begin{equation}\label{7-8}
W_{\sigma}(R;\varphi)=\int_R {\mathcal M}_{\sigma}(z,\varphi)|dz|
\end{equation}
for any $\sigma>\sigma_0$, with an explicitly constructed continuous
non-negative function ${\mathcal M}_{\sigma}(z,\varphi)$ (the associated
$M$-function).    
Then, as indicated in Remark \ref{rem-MU2}, we can deduce the formula of the form
\begin{equation}\label{7-9}
\lim_{T\to\infty}\frac{1}{2T}\int_{-T}^T \Phi(\log\varphi(s+i\tau))d\tau=
\int_{\mathbb{C}} {\mathcal M}_{\sigma}(z,\varphi)\Phi(z)|dz|
\end{equation}
in the region $\sigma>\sigma_0$, for any test function $\Phi$ as in the statement of
Theorem \ref{thm-IM1114}.
Therefore, to complete the proof of our theorems, the only remaining task is to
show \eqref{7-5} for sufficiently many large values of $n$.

\section{Proof of Theorems \ref{thm-square} and \ref{thm-general}}\label{sec8}

Now we return to the specific situation of symmetric power $L$-functions.

\begin{proof}[Proof of Theorem \ref{thm-square}]
In this case, for any $n$ such that $p_n\nmid N$, we see that $g(n)=3$, and
from \eqref{7-6} we have
\begin{align}\label{8-1}
r_{1,n}&=\alpha_f^2(p_n)+\alpha_f(p_n)\beta_f(p_n)+\beta_f^2(p_n)\\
&=(\alpha_f(p_n)+\beta_f(p_n))^2 -\alpha_f(p_n)\beta_f(p_n)\notag\\
&= (\lambda_f(p_n))^2-1. \notag
\end{align}
If $p_n\in\mathbb{P}_f(\varepsilon)$, then $|\lambda_f(\varepsilon)|>\sqrt{2}-\varepsilon$, 
so
\[
r_{1,n}>(\sqrt{2}-\varepsilon)^2-1=1-(2\sqrt{2}\varepsilon-\varepsilon^2),
\]
which is positive if $\varepsilon$ is small.    Since $\mathbb{P}_f(\varepsilon)$ is a
set of positive density, we now obtain the inequality \eqref{7-5} for infinitely
many values of $n$.    
This completes the proof.
\end{proof}

\begin{proof}[Proof of Theorem \ref{thm-general}]
In this case, for any $n$ such that $p_n\nmid N$, 
\[
r_{1,n}=\sum_{h=0}^{\gamma}\alpha_f^{\gamma-h}(p_n)\beta_f^h(p_n).
\]
In particular $r_{1,n}$ is real, and so
\[
r_{1,n}=\Re r_{1,n}=\sum_{h=0}^{\gamma}\cos((\gamma-2h)\theta_f(p_n)).
\]
Then it is easy to see that
\[
r_{1,n}\sin\theta_f(p_n)=\sin((\gamma+1)\theta_f(p_n))
\]
(cf. \cite[p.86]{RMKM}), hence
\begin{equation}\label{8-2}
|r_{1,n}|\geq |\sin((\gamma+1)\theta_f(p_n))|.
\end{equation}

Fix a number $\xi\in(0,\pi/2)$, and let $\eta=\sin\xi$.    Then $0<\eta<1$. 
Define the intervals
\[
A(j)=\left[\frac{2\pi j+\xi}{\gamma+1},\frac{2\pi j+\pi-\xi}{\gamma+1}\right],\;
B(j)=\left[\frac{2\pi j+\pi+\xi}{\gamma+1},\frac{2\pi j+2\pi-\xi}{\gamma+1}\right]
\]
where $j$ is a non-negative integer.    If $\gamma$ is odd, then
\begin{equation}\label{8-3}
|\sin((\gamma+1)\theta_f(p_n))|\geq \eta
\end{equation}
if and only if
\begin{equation}\label{8-4}
\theta_f(p_n)\in I_1:=\bigcup_{j=0}^{(\gamma-1)/2}\left(A(j)\cup B(j)\right).
\end{equation} 
If $\gamma$ is even, then \eqref{8-3} holds if and only if
\begin{equation}\label{8-5}
\theta_f(p_n)\in I_2:=\bigcup_{j=0}^{(\gamma-2)/2}\left(A(j)\cup B(j)\right)
\cup A(\gamma/2).
\end{equation} 
These observations and \eqref{8-2} and \eqref{8-3} imply that $|r_{1,n}|\geq\eta$ if and
only if $\theta_f(p_n)\in I_1$ (if $\gamma$ is odd) or $\in I_2$ (if $\gamma$ is even).
Therefore, to prove Theorem \ref{thm-general}, it is enough to show that the set
\begin{equation}\label{8-6}
\{p:\text{prime}\;|\;\theta_f(p_n)\in I_\ell \}\quad(\ell=1,2)
\end{equation}
is of positive density.

Since
\[
\int_a^b \sin^2\theta d\theta=
\frac{1}{2}\left(b-a-\frac{1}{2}(\sin 2b-\sin 2a)\right),
\]
from \eqref{Thorner} we have
\begin{align}\label{8-7}
\frac{\pi_I(x)}{\pi(x)}=\frac{1}{\pi}\left(b-a-\frac{1}{2}(\sin 2b-\sin 2a)\right)
+O\left((\log x)^{-1/8+\varepsilon}\right)
\end{align}
for $I=[a,b]$.
Denote
\begin{align*}
&a_{A(j)}=\frac{2\pi j+\xi}{\gamma+1}, \;\;b_{A(j)}=\frac{2\pi j+\pi-\xi}{\gamma+1},\\
&a_{B(j)}=\frac{2\pi j+\pi+\xi}{\gamma+1},\;\;
b_{B(j)}=\frac{2\pi j+2\pi-\xi}{\gamma+1}.
\end{align*}
Then from \eqref{8-7} we can write
\begin{equation}\label{8-8}
\frac{\pi_{I_\ell}(x)}{\pi(x)}=\frac{1}{\pi}S_{\ell}+\frac{1}{2\pi}T_{\ell}
+O\left((\log x)^{-1/8+\varepsilon}\right)\qquad (\ell=1,2),
\end{equation}
where
\begin{align*}
&S_1=\sum_{j=0}^{(\gamma-1)/2}\left(b_{A(j)}-a_{A(j)})+(b_{B(j)}-a_{B(j)})\right),\\
&S_2=\sum_{j=0}^{(\gamma-2)/2}\left(b_{A(j)}-a_{A(j)})+(b_{B(j)}-a_{B(j)})\right)
+\left(b_{A(\gamma/2)}-a_{A(\gamma/2)}\right),\\
&T_1=\sum_{j=0}^{(\gamma-1)/2}\left((\sin(2b_{A(j)})-\sin(2a_{A(j)}))
+(\sin(2b_{B(j)})-\sin(2a_{B(j)}))\right),\\
&T_2=\sum_{j=0}^{(\gamma-2)/2}\left((\sin(2b_{A(j)})-\sin(2a_{A(j)}))
+(\sin(2b_{B(j)})-\sin(2a_{B(j)}))\right)\\
&\qquad\qquad+\left(\sin(2b_{A(\gamma/2)})-\sin(2a_{A(\gamma/2)})\right).
\end{align*}
It is easy to see that
\begin{equation}\label{8-9}
S_{\ell}=\pi-2\xi \qquad (\ell=1,2).
\end{equation}
Next we show that
\begin{equation}\label{8-10}
T_{\ell}=0 \qquad (\ell=1,2).
\end{equation}

In fact, we know
\[
(\sin(2b_{\Box(j)})-\sin(2a_{\Box(j)}))
=2\sin\frac{\pi-2\xi}{\gamma+1}\cos\frac{4\pi j +c\pi}{\gamma +1},
\]
where $c=1$ if $\Box=A$ and $c=3$ if $\Box=B$.
Then
\begin{align*}
T_1&=2\sin\frac{\pi-2\xi}{\gamma+1}\sum_{j=0}^{(\gamma-1)/2}\left(
\cos\frac{4\pi j+\pi}{\gamma+1}+\cos\frac{4\pi j+3\pi}{\gamma+1}\right)\\
&=4\sin\frac{\pi-2\xi}{\gamma+1}\cos\frac{\pi}{\gamma+1}
\sum_{j=0}^{(\gamma-1)/2}\cos\frac{4\pi j+2\pi}{\gamma+1},
\end{align*}
and
\begin{align*}
\sin\frac{2\pi}{\gamma+1}\sum_{j=0}^{(\gamma-1)/2}\cos\frac{4\pi j+2\pi}{\gamma+1}
&=\frac{1}{2}\sum_{j=0}^{(\gamma-1)/2}\left(
\sin\frac{4\pi(j+1)}{\gamma+1}-\sin\frac{4\pi j}{\gamma+1}\right)\\
&=\frac{1}{2}(\sin(2\pi)-\sin 0)=0,
\end{align*}
therefore $T_1=0$.    Similarly we find that
\begin{align*}
T_2=4\sin\frac{\pi-2\xi}{\gamma+1}\cos\frac{\pi}{\gamma+1}
\sum_{j=0}^{(\gamma-2)/2}\cos\frac{4\pi j+2\pi}{\gamma+1}
+2\sin\frac{\pi-2\xi}{\gamma+1}\cos\frac{\pi}{\gamma+1},
\end{align*}
and the sum on the right-hand side is equal to $-1/2$, and hence $T_2=0$.

From \eqref{8-8}, \eqref{8-9} and \eqref{8-10} we obtain
\begin{align}\label{8-11}
\frac{\pi_{I_\ell}(x)}{\pi(x)}=1-\frac{2\xi}{\pi}
+O\left((\log x)^{-1/8+\varepsilon}\right)\qquad (\ell=1,2).
\end{align}
Since $\xi<\pi/2$, this implies that the set \eqref{8-6} is of positive density in the
set of all primes.    This completes the proof.
\end{proof}

\begin{remark}
Actually, to prove Theorem \ref{thm-general}, it is not necessary to invoke the
quantitative result of Thorner \cite{Tho14}.    The above argument, combined with the famous 
solution of the Sato-Tate conjecture \cite{BGHT}, implies 
\begin{align}\label{8-12}
\frac{\pi_{I_\ell}(x)}{\pi(x)}\sim 1-\frac{2\xi}{\pi}>0,
\end{align}
which is sufficient for our purpose.
However we may expect that a quantitative formula like \eqref{8-11} will be useful when
we try to develop more detailed study on $M$-functions.
\end{remark}

%
%
\section*{Acknowledgement}
A part of the contents of this paper was presented at the Intrenational Conference on
Number Theory, celebrating the 130th birth anniversary of Srinivasa Ramanujan, 
held in IIT Ropar, Dec 2017.     The first author expresses his gratitude to
Professor Sanoli Gun and Professor Tapas Chatterjee for their kind invitation
and hospitality. 
Research
of the first author is
supported by Grant-in-Aid for Science Research (B) 18H01111, and that
of the
second author is by Grant-in-Aid for Young Scientists (B) 23740020, JSPS.

\end{document}